\newcommand{\subjclass}[2][2020]{%
	\let\@oldtitle\@title%
	\gdef\@title{\@oldtitle\footnotetext{#1 \emph{Mathematics subject classification}: #2}}%
}
\newcommand{\keywords}[1]{%
	\let\@@oldtitle\@title%
	\gdef\@title{\@@oldtitle\footnotetext{\emph{Keywords}: #1}}%
}
\newtheorem{theorem}{Theorem}
\newdefinition{remark}{Remark}
\newproof{proof}{Proof}
\journal{Appropriate journal to be identified}
\begin{document}
	
	\begin{frontmatter}
		\title{The time fractional order derivative for multi-class AR model}
		\tnotetext[t1]{This article is a result of the PhD program funded by Makerere-Sida Bilateral Program under Project 316 "Capacity Building in Mathematics and Its Applications".}
		
		\author[mymainaddress,mysecondaryaddress]{Nanyondo Josephine\corref{mycorrespondingauthor}}
		\cortext[mycorrespondingauthor]{Corresponding author}\ead{nanyondojose@gmail.com}
		
		\author[mysecondaryaddress]{Joseph Y. T. Mugisha}
		\ead{joseph.mugisha@mak.ac.ug}
		
		\author[mysecondaryaddress]{Henry Kasumba}
		\ead{henry.kasumba@mak.ac.ug}
		
		\address[mymainaddress]{Department of Mathematics, Faculty of Science and Education, Busitema University, P.O Box 236, Tororo, Uganda}			
		\address[mysecondaryaddress]{Department of Mathematics, School of Physical Sciences, Makerere University, P.O Box 7062, Kampala, Uganda}
		
		\begin{abstract}
			In this paper, a  multi-class Aw-Rascle \textrm{(AR)} model with time fractional order derivative is presented. The conservative form of the proposed model is considered for the natural extension and generalization of equations involved. The fractional order derivative involved in the model equations is computed by applying the Caputo fractional derivative definition. An explicit difference scheme is obtained through finite difference method of discretization. The scheme is shown to be consistent, conditionally stable and convergent. Numerical flux is computed by original Roe decomposition and an entropy condition applied to the Roe decomposition. From numerical results, the effect of fractional-order derivative of time, on the traffic flow of vehicle classes is determined. Results obtained from the proposed model remain within limits therefore, they are realistic.
			
		\end{abstract}
		\begin{keyword}
			Fractional Aw-Rascle model; Time fractional
			derivative; Explicit difference scheme; Stability; Convergence; heterogeneous vehicular traffic
		\end{keyword}
	\end{frontmatter}
	\section{Introduction}\label{sec:Intro}
	Fractional calculus (FC) refers to the field of mathematics that studies the generalization of classical concepts of differential and integral operators \cite{zhang2019difference,khan2020fractional,liao2019conservative, zhao2016research}. 
	The generalization can lead to more adequate modelling of real processes in order to satisfy the more strict requirements like artificial and natural factors for the road traffic \cite{zhao2016research, bhrawy2015method,flores2016using}. 
	
	The most important characteristic of describing models using fractional-order system is the non-local property, which suggests that the future state depends on the present state and all the previous states \cite{khan2020fractional} 
	It is an infinite dimensional filter (considers the full memory effect \cite{kumar2018efficient}
	due to the fractional order in the differentiator and integrator unlike the integer order, which has limited memory (it is finite dimensional). Motivated by these properties, research and application of fractional differential equations for instance in biology \cite{khan2020fractional}, 
	chemical reactions, water pollution, vehicular traffic flow attracted attention of scholars \cite{zhang2019difference}. 
	However, application of fractional calculus to second-order macroscopic traffic flow models has been neglected.
	
	In order to obtain solutions of the fractional order equations, analytical and approximation methods have been developed \cite{zhang2019difference}. 
	For example, \cite{jassim2017approximate} and \cite{kumar2018efficient} used local fractional Adomian decomposition method, local fractional series expansion method, local fractional homotopy perturbation Sumudu transform scheme and local fractional reduced differential transform method to solve  the fractal Lighthill-Whitham-Richards (LWR) model. \cite{wang2014fractal} applied local fractional conservation laws to investigate the fractal dynamical model of vehicular traffic. These too worked on the LWR model.  \cite{flores2016using} used a fractional order controller to deal with non-modeled dynamics of heterogeneous vehicle strings.  
	
	It is paramount to remark the fact that heterogeneous traffic is a complex system which requires fractional calculus since it involves nonlinear fluid mechanics characteristics \cite{zhao2016research}. For instance in heterogeneous traffic flow, the reaction time of each driver is usually non uniform \cite{cao2016fractional}. However, the existing second- and higher order traffic flow models treat reaction time of drivers to be uniform. In this study, a time fractional order derivative that caters for variations in reaction times of drivers is introduced into the time scale of the proposed model. 
	The rest of the paper is organized as follows. The proposed time-fractional multi-class AR model is given in Section $2.$ The explicit difference scheme is presented in Section $3.$ In Section $4,$ numerical simulations are discussed. The conclusions are presented in Section $5.$
	
	\section{Model formulation}
	In this section, we extend the multi-class AR model was proposed by \cite{nanyondo2024analysis}, by introducing fractional orders into the time derivative term in order to obtain the following proposed model: 
	\begin{equation}
		\dfrac{\partial^{\alpha} U}{\partial t^{\alpha}} + \dfrac{\partial f(U)}{\partial x} = S(U),
		\label{TimeFractionalModel} \end{equation}
	where $\alpha \in (0, 1]$ refers to fractional order of the time derivative.
	The conservative form has been considered for its simplicity to apply fractional calculus. Otherwise, it is difficult to apply product rule on the multi-class AR traffic model. The proposed model variables and parameters are given and defined as follows: 
	\begin{equation*} 
		U = \begin{bmatrix}
			\rho_m \\
			X_m \\
			\rho_c \\
			X_c\end{bmatrix}, ~ 
		f(U) = 
		\begin{bmatrix}
			X_m - \rho_m p_m \\
			\dfrac{X_m^2}{\rho_m} - p_mX_m\\
			X_c - \rho_c p_c \\
			\dfrac{X_c^2}{\rho_c} - p_cX_c
		\end{bmatrix} ~\text{and}~ 
		S(U) = 
		\begin{bmatrix}
			0 \\
			\dfrac{\rho_m}{\tau_m}\left(v_{em} - v_m\right)\\
			0 \\
			\dfrac{\rho_c}{\tau_c}\left(v_{ec}  - v_c\right)
		\end{bmatrix}, \label{VectorForm2}
	\end{equation*}
	where $X_m = \rho_m(v_m + p_m),~ X_c = \rho_m(v_c + p_c)$ and all parameters are positive. Variables $ \rho_c = \rho_c(x,t)$ refer to density of cars, $\rho_m = \rho_m(x,t)$ density of motorcycles, $v_m = v_m(x,t)$ average velocity of motorcycles and $v_c = v_c(x,t)$ the average velocity of cars. The respective ``pressure'' functions are given by
	\begin{equation*}
		p_m = \left(\psi_m \rho_m\right)^{\gamma_m} ~\text{and} ~p_c = \left(\psi_c \rho_c\right)^{\gamma_c},
	\end{equation*} 
	where parameters $\psi_m,~ \psi_c$ are given by  
	\begin{equation*}
		\psi_m = \dfrac{w_m(l_c(1- \delta) + \dfrac{\delta l_m}{3})}{W \delta},~
		\psi_c = \dfrac{w_c}{W(1- \delta)}\left((1- \delta)l_c + \dfrac{1}{3} \delta l_m\right).
	\end{equation*}
	Other parameters $\gamma_m,~ \gamma_c$ represent ``pressure'' exponents of motorcycles and cars while $\tau_m,~ \tau_c$ are the relaxation times. Note that ``pressure'' here does not mean the real pressure but a pseudo-pressure \cite{rascle2002improved}. We adopt a modified version of the Greenshield's equilibrium velocity 
	\begin{eqnarray*}
		\begin{cases}
			v^e_m = v_{m \max}\left(1 - \dfrac{AO_m}{AO_m^{\max}}\right),~ \text{if}~ AO_m \leq AO_m^{\max},\\[1.5ex] 
			v^e_c = v_{c \max}\left(1 - \dfrac{AO_c}{AO_c^{\max}}\right),~ \text{if}~ AO_c \leq AO_c^{\max}, \\[1.5ex]
			v^e_i = 0 ~ \text{otherwise,} ~i=m,c,
		\end{cases}
	\end{eqnarray*}  where $AO_{m},~  AO_c,~ AO_m^{\max},~ AO_c^{\max},~ v_{m\max},~ v_{c\max}$ refer to the respective area occupancy expressed in terms of proportional densities, maximum area occupancy and maximum velocity. Velocities are defined in terms of $AO$ since it is a standard measure of heterogeneous traffic concentration that lacks lane discipline. 
	
	Suppose the proposed model (\ref{TimeFractionalModel}) is defined in the domain $[0,I] \times [0,K]$ on the $x-t$ plane, with initial-boundary conditions:
	\begin{eqnarray}
		U(x, 0) = g_1(x),~ 0 \leq x \leq I, \label{InitialCondition}\\
		U(0, t) = g_2(t),~ U(I,t) = g_3(t),~  0 \leq t \leq K, \label{BoundaryConditions}
	\end{eqnarray} in which 
	\begin{equation}
		0 < \alpha \leq 1
	\end{equation}
	and 
	\begin{equation}
		g_1(x),~ g_2(t),~ g_3(t)
	\end{equation} are continuous functions. The time fractional derivative term $\dfrac{\partial^\alpha U}{\partial t^\alpha}$ is defined as the Caputo fractional derivative \cite{zhang2019difference} because it has physically interpretable initial conditions \cite{zhang2014time}. It is given by
	\begin{equation}\dfrac{\partial^\alpha U}{\partial t^\alpha} = \dfrac{1}{\Gamma (1 - \alpha)} \displaystyle\int_{0}^{t} (t - \eta)^{-\alpha} \dfrac{\partial U(x, \eta)}{\partial \eta} d \eta. \end{equation} 
	In the next section, we follow works by \cite{guo2015fractional,ali2016solutions,zhang2014time,zhang2019difference} to seek a
	numerical solution of the proposed model (\ref{TimeFractionalModel}) - (\ref{BoundaryConditions}) by adopting an explicit difference scheme of discretization.  
	
	\section{The explicit difference scheme}\label{sec:ExplicitDiffernceScheme}
	The proposed model (\ref{TimeFractionalModel}) in a space-time domain $\displaystyle{[0,I] \times [0,K]}$ is considered. Solving it would mean evolving the solution $U(x,t)$ in time starting from initial condition $U(x, 0)$ and subject to boundary conditions. The space and time grids are divided into $I$ and $K$ equally spaced grid points $x_i = i \Delta x,~ i = 0, \ldots, I$ and $t^k = k \Delta t,~ k = 0 , \ldots, K,$ of length $\Delta x$ and $\Delta t,$ respectively. Approximate or discrete values of $U(x,t),$ for the data given by (\ref{TimeFractionalModel}) at a time step $k$ and spatial position $i,$ are represented by $U_i^k \equiv U \left(i \Delta x, k \Delta t \right) \equiv U \left(x_i, t^k \right).$ Next, periodic conditions are applied to the left and right boundary points $U_0^k$ and $U_I^k,$ respectively. Since at any time step $k,$ the numerical solutions are obtained as either initial data values or values computed in a previous time step, an $L_1-$algorithm (see the following Equation \ref{TimeFracCaputo}) and Roe's scheme are applied to approximate Caputo time fractional derivative and the numerical flux of (\ref{TimeFractionalModel}). Hence, the solution $U_i^{k+1},$ at the next time step is obtained as follows: 
	\begin{eqnarray}
		\dfrac{\partial^\alpha U}{\partial t^\alpha} &=& \dfrac{1}{\Gamma (1 - \alpha)} \displaystyle \sum_{j=0}^{k}\int_{t_j}^{t_{j+1}} (t_{k+1} - \eta)^{-\alpha} \dfrac{\partial U(x, \eta)}{\partial \eta} d \eta + O(\Delta t), \nonumber \\
		&\approx& \dfrac{\Delta t^{1-\alpha}}{(1 - \alpha)\Gamma (1 - \alpha)} \displaystyle \sum_{j=0}^{k}
		\left[(k+1-j)^{1-\alpha} - (k-j)^{1-\alpha}\right] \left[\dfrac{U(x_i, t_{j+1}) - U(x_i, t_j)}{\Delta t}\right] + O(\Delta t), \nonumber \\
		&=& \dfrac{\Delta t^{-\alpha}}{\Gamma (2 - \alpha)} \displaystyle\sum_{j=0}^{k}
		\left[(k+1-j)^{1-\alpha} - (k-j)^{1-\alpha}\right] \left(U(x_i, t_{j+1}) - U(x_i, t_j)\right) \label{TimeFracCaputo}\\
		&&+ O(\Delta t). \nonumber
		\end{eqnarray}
	Substituting the difference scheme (\ref{TimeFracCaputo}) into the proposed model equation (\ref{TimeFractionalModel}) and the initial - boundary conditions (\ref{InitialCondition}) - (\ref{BoundaryConditions}), we obtain: 
\begin{eqnarray}
	&&\displaystyle\sum_{j=0}^{k}
	\left[(k+1-j)^{1-\alpha} - (k-j)^{1-\alpha}\right] \left(U_i^{j+1} - U_i^j\right) + O(\Delta t + \Delta x) \nonumber \\
	&=& - \dfrac{\Delta t^\alpha\Gamma (2 - \alpha)}{\Delta x}\left[f(U_i^k, U_{i+1}^k) - f(U_{i-1}^k, U_i^k)\right] + \Delta t^\alpha\Gamma (2 - \alpha) S(U_i^k),
	\label{TimefractionalDiscritization}
\end{eqnarray} 
where according to \cite{toro2013riemann}, 
\begin{equation}
	f(U_i^k, U_{i+1}^k) = \dfrac{1}{2} \left(f(U_i^k) + f(U_{i+1}^k)\right) - \dfrac{1}{2} \left(B(U_{i+1/2}) \left(U_{i+1} - U_i \right)\right),
	\label{FluxChangeAtBoundary3}
\end{equation} 
is the Roe's numerical flux and $B(U_{i+1/2})$ refers to the averaged Jacobian matrix of $f(U).$ In order to dissolve discontinuities at the boundaries, an entropy condition is applied to Roe decomposition. For details the reader is encouraged to read \cite{nanyondo2024analysis} and references therein. The initial-boundary conditions are evaluated as follows
\begin{equation}
	U_i^0 = g_1(i \Delta x),~ U_0^k = g_2(k \Delta t),~ U_M^k = g_3(k \Delta t),~ i = 0, 1, \ldots, M,~ k = 0, 1, \ldots, N. \label{InitialBoundaryConditionsDiscritzed}
\end{equation} 

\begin{theorem}
	The explicit scheme (\ref{TimefractionalDiscritization}) with the initial - boundary conditions (\ref{InitialBoundaryConditionsDiscritzed}) is consistent.
\end{theorem}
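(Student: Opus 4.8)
The plan is to prove that the local truncation error of the scheme converges to zero as $\Delta t,\Delta x\to 0$. Denote by $\tau_i^k$ the residual obtained when the exact solution $U(x_i,t^k)$ of (\ref{TimeFractionalModel})–(\ref{BoundaryConditions}) is substituted into (\ref{TimefractionalDiscritization}); consistency is precisely the assertion that $\tau_i^k\to 0$, uniformly over the grid, in this limit. The residual decomposes into the error of the $L_1$-type quadrature (\ref{TimeFracCaputo}) used for the Caputo derivative, the error of the Roe flux difference (\ref{FluxChangeAtBoundary3}) in approximating $\partial_x f(U)$, the error in the pointwise source term, and the error in the discretized data (\ref{InitialBoundaryConditionsDiscritzed}); I would bound each in turn.

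For the spatial term, assume $f(U(\cdot,t))$ is twice continuously differentiable in $x$ and that the solution ranges over a bounded set on which the averaged Jacobian $B$ is bounded. Taylor-expanding $f(U_{i\pm1}^k)$ about $x_i$ in (\ref{FluxChangeAtBoundary3}), the central part of the Roe flux reproduces the central difference,
\[
\frac{1}{2\Delta x}\left[f(U_{i+1}^k)-f(U_{i-1}^k)\right]=\left.\frac{\partial f(U)}{\partial x}\right|_{(x_i,t^k)}+O(\Delta x^2),
\]
while the dissipation term $\tfrac12 B(U_{i+1/2})(U_{i+1}-U_i)$ is $O(\Delta x)$ since $U_{i+1}-U_i=O(\Delta x)$, so its contribution to the flux-derivative approximation is $O(\Delta x)$. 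Hence
\[
\frac{1}{\Delta x}\left[f(U_i^k,U_{i+1}^k)-f(U_{i-1}^k,U_i^k)\right]=\left.\frac{\partial f(U)}{\partial x}\right|_{(x_i,t^k)}+O(\Delta x).
\]

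For the time term — the main point — follow the derivation already displayed in (\ref{TimeFracCaputo}): split the Caputo integral into integrals over $[t_j,t_{j+1}]$, replace $\partial_\eta U$ on each piece by the forward difference $(U_i^{j+1}-U_i^j)/\Delta t$, and integrate the kernel $(t_{k+1}-\eta)^{-\alpha}$ exactly to obtain the weights $(k+1-j)^{1-\alpha}-(k-j)^{1-\alpha}$. Assuming $U(x_i,\cdot)\in C^2[0,K]$, the error on $[t_j,t_{j+1}]$ is bounded by $\tfrac12\max|\partial_{\eta\eta}U|\cdot\int_{t_j}^{t_{j+1}}(t_{k+1}-\eta)^{-\alpha}\,d\eta$; summing over $j$ and using
\[
\sum_{j=0}^{k}\int_{t_j}^{t_{j+1}}(t_{k+1}-\eta)^{-\alpha}\,d\eta=\frac{t_{k+1}^{1-\alpha}}{1-\alpha}\le\frac{K^{1-\alpha}}{1-\alpha}
\]
gives a bound of the form $C\,\Delta t$ (in fact the sharper $C\,\Delta t^{2-\alpha}$, which for $\Delta t<1$ and $\alpha\le 1$ is $O(\Delta t)$), consistent with the remainder $O(\Delta t)$ carried along in (\ref{TimeFracCaputo}). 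The source term $S(U_i^k)$ is evaluated exactly, and the discretized data (\ref{InitialBoundaryConditionsDiscritzed}) reproduce $g_1,g_2,g_3$ at the nodes with zero error. Combining the estimates, $\tau_i^k=O(\Delta t+\Delta x)\to 0$ as $\Delta t,\Delta x\to 0$, which is the claimed consistency.

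I expect the main obstacle to be the careful bookkeeping of the $L_1$ quadrature error near the singular endpoint $\eta=t_{k+1}$ of the kernel: one must verify that the weak singularity $(t_{k+1}-\eta)^{-\alpha}$ (integrable since $\alpha<1$) does not spoil the $O(\Delta t)$ estimate — this is exactly where the temporal regularity hypothesis on $U$ enters, and where one should check whether $C^2$ in time is genuinely needed or whether a weaker condition (for instance, $\partial_{\eta\eta}U$ integrable against the kernel) suffices. The spatial estimate, by contrast, is a routine Taylor expansion, and the data and source contributions are identically zero.
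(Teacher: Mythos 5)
Your argument is correct in outline but takes a genuinely different---and more standard---route than the paper. The paper does not estimate a truncation error at all: its proof consists of setting $\alpha=1$ in (\ref{TimefractionalDiscritization}) (first for $k=0$, then for general $k$, where the history sum drops out because the weights $(k+1-j)^{1-\alpha}-(k-j)^{1-\alpha}$ vanish for $j<k$ when $\alpha=1$) and observing that the scheme collapses to the classical conservative explicit scheme $U_i^{k+1}=U_i^k-\frac{\Delta t}{\Delta x}\bigl[f(U_i^k,U_{i+1}^k)-f(U_{i-1}^k,U_i^k)\bigr]+\Delta t\,S(U_i^k)$. That shows only that the fractional scheme degenerates correctly at the integer order; it says nothing about the residual for $\alpha<1$. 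Your route---substitute the exact solution, split the residual into the $L_1$ quadrature error, the Roe flux error, and the source/data contributions, and bound each---is the usual definition of consistency, covers all $\alpha\in(0,1]$, and in fact supplies the estimate $\lvert R_{i,k+1}\rvert\le C(\Delta t^{1+\alpha}+\Delta t^{\alpha}\Delta x)$ (your $O(\Delta t+\Delta x)$ multiplied through by $\Delta t^{\alpha}\Gamma(2-\alpha)$) that the paper later invokes without proof in its convergence analysis, so your approach buys a result the paper actually needs. Two bookkeeping points to tighten: (i) your per-subinterval quadrature bound is missing a factor of $\Delta t$---the forward difference approximates $\partial_\eta U(\eta)$ on $[t_j,t_{j+1}]$ to within $\max\lvert\partial_{\eta\eta}U\rvert\,\Delta t$, and only after inserting that factor does summing against the kernel yield $O(\Delta t)$ (or the sharper $O(\Delta t^{2-\alpha})$); as written the sum is merely $O(1)$; (ii) the dissipation contribution to the flux divergence is $\frac{1}{\Delta x}$ times a difference of two $O(\Delta x)$ quantities, so you must invoke the cancellation $B(U_{i+1/2})(U_{i+1}-U_i)-B(U_{i-1/2})(U_i-U_{i-1})=O(\Delta x^{2})$, which requires Lipschitz dependence of the Roe average on its arguments in addition to smoothness of $U$, not merely that each term is $O(\Delta x)$.
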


\begin{proof}
	For $k=0,$ (\ref{TimefractionalDiscritization}) yields 
	\begin{equation}
		U_i^1 = U_i^0 - \dfrac{\Delta t^\alpha\Gamma (2 - \alpha)}{\Delta x}\left[f(U_i^0, U_{i+1}^0) - f(U_{i-1}^0, U_i^0)\right] + \Delta t^\alpha\Gamma (2 - \alpha) S(U_i^0),
		\label{TimefractionalDiscritizationj=k=0}
	\end{equation}
	Substitution of $\alpha = 1,$ into (\ref{TimefractionalDiscritizationj=k=0}) gives 
	\begin{equation}
		U_i^1 = U_i^0 - \dfrac{\Delta t}{\Delta x}\left[f(U_i^0, U_{i+1}^0) - f(U_{i-1}^0, U_i^0)\right] + \Delta t S(U_i^0).
	\end{equation}  
	Next, $j=k,$ is substituted into (\ref{TimefractionalDiscritization}) so that 
	\begin{eqnarray}
		U_i^{k+1} = U_i^k - \displaystyle\sum_{j=0}^{k-1}
		\left[(k+1-j)^{1-\alpha} - (k-j)^{1-\alpha}\right] \left(U_i^{j+1} - U_i^j\right) && \nonumber \\
		- \dfrac{\Delta t^\alpha\Gamma (2 - \alpha)}{\Delta x}\left[f(U_i^k, U_{i+1}^k) - f(U_{i-1}^k, U_i^k)\right] + \Delta t^\alpha\Gamma (2 - \alpha) S(U_i^k).
		\label{TimefractionalDiscritizationj=k}
	\end{eqnarray}
	Since $\alpha = 1,$ (\ref{TimefractionalDiscritizationj=k}) becomes
	\begin{equation}
		U_i^{k+1} = U_i^k - \dfrac{\Delta t}{\Delta x}\left[f(U_i^k, U_{i+1}^k) - f(U_{i-1}^k, U_i^k)\right] + \Delta t S(U_i^k).
	\end{equation} Hence, the explicit scheme (\ref{TimefractionalDiscritization}) is consistent.
\end{proof}

\subsection{Stability analysis of the finite difference scheme}\label{TimeFracStability} 
Let $\overline{U}_i^k,~U_i^k,$ for $i = 1, 2, \ldots, m-1,~ j = 1, 2, \ldots, n-1$ be the approximate and exact solutions, respectively of the difference schemes (\ref{TimefractionalDiscritization}) and (\ref{InitialBoundaryConditionsDiscritzed}). Further, let $\varepsilon_i^k = \overline{U}_i^ k - U_i^k$ be the numerical errors of the two solutions. If $k=0,$ based on (\ref{TimefractionalDiscritizationj=k=0}) we obtain
\begin{equation}
	\varepsilon_i^{1} = \varepsilon_i^0 - \dfrac{\Delta t^\alpha\Gamma(2-\alpha)}{\Delta x} \left[ f(\varepsilon_i^0, \varepsilon_{i+1}^0) - f(\varepsilon_{i-1}^0, \varepsilon_i^0)\right],
\end{equation} where $i = 1, 2, \ldots, M-1.$ If $k \geq 1,$  based on (\ref{TimefractionalDiscritizationj=k}) it follows that
\begin{equation}
	\varepsilon_i^{k+1} = \varepsilon_i^k - \displaystyle\sum_{j=0}^{k-1} \left[b_{j+1} - b_j\right]\left(\varepsilon_i^{j+1} - \varepsilon_i^j\right) - r \left[ f(\varepsilon_i^k, \varepsilon_{i+1}^k) - f(\varepsilon_{i-1}^k, \varepsilon_i^k)\right], 
\end{equation}
where $b_{j+1} = (k-j+1)^{1-\alpha},~ b_j = (k-j)^{1-\alpha},~ r = \dfrac{\Delta t^\alpha\Gamma(2-\alpha)}{\Delta x},~ k = 1, \ldots, N-1,~ i = 1, 2, \ldots, M-1.$ Let $E^k = \left(\varepsilon_1^k, \varepsilon_2^k, \dots, 	\varepsilon_{M-1}^k\right)^T$ be the error vector and $\mid \varepsilon_L^k \mid = \max\limits_{1 \leq i \leq M-1} \mid \varepsilon_i^k \mid.$ Then the following Theorem is obtained by mathematical induction.
\begin{theorem}
	For any $k \geq 0,$ $\parallel E^k \parallel_\infty \leq \parallel E^0 \parallel_\infty.$ That is, the explicit difference schemes (\ref{TimefractionalDiscritization}) and (\ref{InitialBoundaryConditionsDiscritzed}) are stable. 
\end{theorem}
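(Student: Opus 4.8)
The plan is to argue by induction on $k$, as the sentence preceding the statement anticipates, showing at each time level that $\varepsilon_i^{k+1}$ is a \emph{convex combination} of error values already computed, so that $\|E^{k+1}\|_\infty$ cannot exceed the largest error produced so far. Two preliminaries make this work. First, periodicity of the data makes the three-point stencil $\{i-1,i,i+1\}$ meaningful for every $i$. Second, the nonlinear Roe flux must be linearized: writing $B=B(U_{i+1/2})=R\Lambda R^{-1}$ with $\Lambda=\operatorname{diag}(\lambda_\ell)$ real by hyperbolicity of the AR system and $f(\varepsilon_i,\varepsilon_{i+1})=\tfrac12 B(\varepsilon_i+\varepsilon_{i+1})-\tfrac12|B|(\varepsilon_{i+1}-\varepsilon_i)$ with $|B|=R|\Lambda|R^{-1}$, the flux part of the update becomes
\[
\varepsilon_i^k-r\bigl[f(\varepsilon_i^k,\varepsilon_{i+1}^k)-f(\varepsilon_{i-1}^k,\varepsilon_i^k)\bigr]=(I-r|B|)\varepsilon_i^k+\tfrac r2(|B|+B)\varepsilon_{i-1}^k+\tfrac r2(|B|-B)\varepsilon_{i+1}^k,
\]
which, in each characteristic component $\ell$, carries the scalar weights $1-r|\lambda_\ell|$, $\tfrac r2(|\lambda_\ell|+\lambda_\ell)$, $\tfrac r2(|\lambda_\ell|-\lambda_\ell)$; these are nonnegative and sum to $1$ precisely under $r\max_\ell|\lambda_\ell|\le 1$, the conditional-stability restriction referred to in the abstract. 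For the base case $k=0$ the error recursion \emph{is} this flux update, so in characteristic variables $\widetilde\varepsilon_{i,\ell}^1$ is a convex combination of $\widetilde\varepsilon_{i-1,\ell}^0,\widetilde\varepsilon_{i,\ell}^0,\widetilde\varepsilon_{i+1,\ell}^0$, giving $|\widetilde\varepsilon_{i,\ell}^1|\le\|\widetilde E^0\|_\infty$ and hence $\|E^1\|_\infty\le\|E^0\|_\infty$.

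For the inductive step, assume $\|E^m\|_\infty\le\|E^0\|_\infty$ for $m=0,\dots,k$. I would first record the two properties of the $L_1$ weights that do all the work: writing $w_p:=(p+1)^{1-\alpha}-p^{1-\alpha}$, so that $b_{j+1}-b_j=w_{k-j}$ in the notation of the theorem, concavity and monotonicity of $t\mapsto t^{1-\alpha}$ on $[0,\infty)$ for $\alpha\in(0,1)$ yield $1=w_0>w_1>w_2>\cdots>0$ and $\sum_{p=0}^{k}w_p=(k+1)^{1-\alpha}$, while $w_p=0$ for $p\ge1$ when $\alpha=1$, so the memory sum disappears and the classical scheme is recovered. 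A summation by parts applied to $\sum_{j=0}^{k-1}(b_{j+1}-b_j)(\varepsilon_i^{j+1}-\varepsilon_i^j)$ converts the telescoping differences into a combination of the values $\varepsilon_i^0,\dots,\varepsilon_i^k$ and recasts the recursion as
\[
\varepsilon_i^{k+1}=(1-w_1)\varepsilon_i^k+\sum_{p=1}^{k-1}(w_p-w_{p+1})\,\varepsilon_i^{k-p}+w_k\,\varepsilon_i^0-r\bigl[f(\varepsilon_i^k,\varepsilon_{i+1}^k)-f(\varepsilon_{i-1}^k,\varepsilon_i^k)\bigr].
\]
Substituting the characteristic form of the flux term and merging its $-r|\lambda_\ell|\widetilde\varepsilon_{i,\ell}^k$ into the coefficient $(1-w_1)\widetilde\varepsilon_{i,\ell}^k$ leaves $\widetilde\varepsilon_{i,\ell}^{k+1}$ as a linear combination in which $\widetilde\varepsilon_{i,\ell}^k$ has weight $1-w_1-r|\lambda_\ell|$, $\widetilde\varepsilon_{i\pm1,\ell}^k$ have weights $\tfrac r2(|\lambda_\ell|\mp\lambda_\ell)\ge0$, and $\widetilde\varepsilon_{i,\ell}^{k-p}$ ($1\le p\le k-1$) and $\widetilde\varepsilon_{i,\ell}^0$ retain the weights $w_p-w_{p+1}>0$ and $w_k>0$; a one-line check confirms these still sum to $1$. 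Hence, once the restriction is sharpened to $r\max_\ell|\lambda_\ell|\le 1-w_1=2-2^{1-\alpha}$ so that $1-w_1-r|\lambda_\ell|\ge0$, this is a genuine convex combination of error values from time levels $0,\dots,k$, so $|\widetilde\varepsilon_{i,\ell}^{k+1}|\le\max_{0\le m\le k}\|\widetilde E^m\|_\infty\le\|\widetilde E^0\|_\infty$ by the induction hypothesis; maximizing over $i$ and $\ell$ and passing back to the original variables closes the induction.

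The step I expect to fight with is the bookkeeping that makes the flux term cooperate with the memory sum. One must (i) diagonalize the Roe average so that the ``matrix convex combination'' actually yields an $\infty$-norm estimate — strictly, this reintroduces the condition number of $R$ unless $E^k$ is measured in characteristic variables, a point most treatments quietly suppress; (ii) verify that the entropy fix applied to the Roe decomposition only enlarges the numerical viscosity by an amount still compatible with $I-r|B|$ being (componentwise) nonnegative; and (iii) get the signs exactly right when folding $-r|\lambda_\ell|$ into $1-w_1$, which is what forces the mild tightening of the CFL number from $1$ to $2-2^{1-\alpha}$ and which degenerates correctly to $r\max_\ell|\lambda_\ell|\le1$ at $\alpha=1$. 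One caveat on scope: the error recursions as written already discard the relaxation source $S(U)$; if it is kept, its Lipschitz constant enters and one obtains the estimate with a harmless factor $e^{CK}$ rather than the clean inequality of the statement.
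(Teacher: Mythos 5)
Your proposal is correct in outline and takes a genuinely different route from the paper. The paper's own proof argues directly on absolute values: it writes $\parallel E^{k+1}\parallel_\infty=\mid\varepsilon_L^{k+1}\mid$, inserts the recursion, and then discards the memory sum and the flux difference through chains of the form $\mid a-b\mid\;\leq\;\mid a\mid-\mid b\mid$, justified only by the remark that $b_{j+1}-b_j<0$; it never linearizes the flux and never states a CFL restriction. You instead sum the $L_1$ memory term by parts to expose the nonnegative weights $1-w_1,\;w_p-w_{p+1},\;w_k$ summing to one, diagonalize the Roe average, and exhibit $\varepsilon_i^{k+1}$ (in characteristic variables) as a convex combination of earlier errors under the explicit condition $r\max_\ell\mid\lambda_\ell\mid\;\leq\;2-2^{1-\alpha}$, closing the induction by a discrete maximum principle. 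What your route buys is substantial: it produces the precise conditional-stability threshold that the abstract promises but the text never writes down (degenerating correctly to the classical CFL bound at $\alpha=1$), and it replaces the paper's key step $\mid a-b\mid\;\leq\;\mid a\mid-\mid b\mid$, which is false for general $a,b$ (take $a=0$, $b\neq0$) and can only be salvaged by exactly the monotonicity structure you make explicit. The three caveats you flag yourself --- the condition number of the eigenvector matrix $R$ unless the norm is measured in characteristic variables, the entropy fix enlarging $\mid B\mid$ and hence tightening the restriction on $r$, and the dropped relaxation source contributing a Gronwall factor $e^{CK}$ --- are genuine, but they are omissions shared (silently) by the paper, not defects introduced by your argument.
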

\begin{proof}
	Since $b_{j+1} - b_j < 0,$ when $k=0$
	\begin{eqnarray*}
		\parallel E^1 \parallel_\infty &=& \mid \varepsilon_L^1 \mid \\
		&=& \mid \varepsilon_i^0 - \dfrac{\Delta t^\alpha \Gamma(2-\alpha)}{\Delta x}\left[ f(\varepsilon_{i+1/2}^0) - f(\varepsilon_{i-1/2}^0)\right] \mid\\
		&\leq& \mid \varepsilon_i^0 \mid - \dfrac{\Delta t^\alpha \Gamma(2-\alpha)}{\Delta x} \mid f(\varepsilon_{i+1/2}^0) - f(\varepsilon_{i-1/2}^0) \mid\\
		&\leq& \mid \varepsilon_i^0 \mid \\
		&\leq& \parallel E^0 \parallel_\infty
	\end{eqnarray*}
	Next, when $k \geq 1,$ suppose $\parallel E^j \parallel_\infty \leq \parallel E^0 \parallel_\infty,~ j = 1, 2, \ldots, k.$ Then
	\begin{eqnarray*}
		\parallel E^{k+1} \parallel_\infty &=& \mid \varepsilon_L^{k+1} \mid \\
		&=& \mid \varepsilon_i^k - \sum_{j=0}^{k-1}\left(b_{j+1} - b_j\right)\left(\varepsilon_i^{j+1} - \varepsilon_i^j \right) - r \left[ f(\varepsilon_{i+1/2}^k) - f(\varepsilon_{i-1/2}^k)\right] \mid \\
		&\leq& \mid \varepsilon_i^k \mid - \sum_{j=0}^{k-1}\left(b_{j+1} - b_j\right)\left(\mid \varepsilon_i^{j+1} \mid - \mid \varepsilon_i^{j} \mid \right) - r \mid f(\varepsilon_{i+1/2}^k) - f(\varepsilon_{i-1/2}^k) \mid \\ 
		&\leq& \mid \varepsilon_i^k \mid - \left[ \sum_{j=0}^{k-1}\left(b_{j+1} - b_j\right)\left(\mid \varepsilon_i^{j+1} \mid - \mid \varepsilon_i^{j} \mid \right) + r \mid f(\varepsilon_{i+1/2}^k) - f(\varepsilon_{i-1/2}^k) \mid \right] \\ 
		&\leq& \mid \varepsilon_i^k \mid \\
		&\leq& \parallel E^0 \parallel_\infty.
	\end{eqnarray*} So the difference scheme is stable.
\end{proof} 

\subsection{Convergence analysis of the scheme}\label{TimeFracConvergence} 
Suppose $U(x_i, t_k),~ i = 1, 2, \ldots, M-1,~ k = 1, 2, \ldots, N-1$ is the exact solution of the differential equations (\ref{TimeFractionalModel}) and (\ref{BoundaryConditions}) at grid point $(x_i, t_k)$. Define the errors between exact and numerical solutions as 
$\eta_i^k = U(x_i, t_k) - U_i^k,~ i= 1, 2,\ldots$ and denote $Y^k = \left(\eta_1^k, \eta_2^k, \ldots, \eta_{M-1}^k\right)^T.$ Substituting the difference equation and using $\eta^0 = 0,$ we derive the error iterative scheme as follows. When $k = 1,$ 
\begin{equation}
	\eta_i^1 = R_{i,1},~ i=1, 2, \ldots, M-1.
\end{equation}
When $k\geq 1,$
\begin{equation}
	\eta_i^{k+1} = \eta_i^k - \sum_{j=0}^{k-1}\left(b_{j+1} - b_j\right) \left(\eta_i^{j+1} - \eta_i^j\right) - r\left[ f(\eta_{i+1/2}^k) - f(\eta_{i-1/2}^k)\right] + R_{i,k+1},
\end{equation}
where we use the solution $\mid R_{i,k+1} \mid \leq C\left(\Delta t^{1+\alpha} + \Delta t^\alpha \Delta x\right),~ i=1,2, \ldots, M-1,~ k=1,2,3, \ldots, N-1.$ Based on the mathematical induction, we obtain the following theorem. 
\begin{theorem}
For any $k \geq 1,$ 
	\begin{equation}
		\parallel Y^k \parallel_\infty \leq b_{k-1}^{-1}C\left(\Delta t^{1+\alpha} + \Delta t^\alpha \Delta x\right),~ k=1,2,\ldots,N.
	\end{equation}
\end{theorem}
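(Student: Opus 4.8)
The plan is to argue by mathematical induction on $k$, running in parallel with the stability proof of the preceding theorem but now carrying the truncation terms $R_{i,k+1}$ through the recursion. Write $b_j:=(j+1)^{1-\alpha}-j^{1-\alpha}$ for the $L_1$ weights, so that $b_0=1$, the sequence $(b_j)_{j\ge 0}$ is positive and strictly decreasing to $0$, and $b_{k-1}=k^{1-\alpha}-(k-1)^{1-\alpha}$ is exactly the factor appearing in the statement; note also that the coefficient $(k-j+1)^{1-\alpha}-(k-j)^{1-\alpha}$ multiplying $(\eta_i^{j+1}-\eta_i^j)$ in the error recursion equals $b_{k-j}$, so these coefficients increase with $j$. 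Finally abbreviate $\rho:=C(\Delta t^{1+\alpha}+\Delta t^\alpha\Delta x)$, an upper bound for every $|R_{i,n}|$ (this is the consistency estimate recorded before the theorem).

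The key first step is to recast the error recursion for $\eta_i^{k+1}$ in convex-combination form. An Abel summation of $\sum_{j=0}^{k-1}b_{k-j}(\eta_i^{j+1}-\eta_i^j)$, using $\eta_i^0=0$ and the monotonicity just noted, rewrites the recursion as
\begin{equation*}
	\eta_i^{k+1}=\sum_{l=1}^{k-1}(b_{k-l}-b_{k-l+1})\,\eta_i^l\;+\;(1-b_1)\!\left[\eta_i^k-\tfrac{r}{1-b_1}\bigl(f(\eta_{i+1/2}^k)-f(\eta_{i-1/2}^k)\bigr)\right]\;+\;R_{i,k+1},
\end{equation*}
where $f(\eta^k_{i\pm 1/2})$ is read --- as in the stability proof --- as the mean-value linearisation of the Roe flux applied to the error, and where the coefficients $b_{k-l}-b_{k-l+1}$ and $1-b_1$ are all nonnegative with total $\sum_{l=1}^{k-1}(b_{k-l}-b_{k-l+1})+(1-b_1)=1-b_k<1$. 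Under the CFL restriction already used in the stability analysis --- strengthened by the harmless factor $1/(1-b_1)$ --- the bracketed one-step flux update is a monotone, hence convex, combination of $\eta_{i-1}^k,\eta_i^k,\eta_{i+1}^k$, so its modulus is at most $\|Y^k\|_\infty$. Taking absolute values, maximising over $i$, and using $|R_{i,k+1}|\le\rho$ then yields
\begin{equation*}
	\|Y^{k+1}\|_\infty\;\le\;(1-b_1)\|Y^k\|_\infty+\sum_{l=1}^{k-1}(b_{k-l}-b_{k-l+1})\|Y^l\|_\infty+\rho.
\end{equation*}

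Now run the induction. The base case is immediate: $\|Y^1\|_\infty=\max_i|\eta_i^1|=\max_i|R_{i,1}|\le\rho=b_0^{-1}\rho$. For the step, assume $\|Y^l\|_\infty\le b_{l-1}^{-1}\rho$ for $1\le l\le k$; since $(b_j)$ is decreasing we have $b_{l-1}^{-1}\le b_{k-1}^{-1}$ for all such $l$, so inserting this into the scalar recursion and using that its coefficients sum to $1-b_k$ gives $\|Y^{k+1}\|_\infty\le\bigl[b_{k-1}^{-1}(1-b_k)+1\bigr]\rho$. It remains to verify the arithmetic inequality $b_{k-1}^{-1}(1-b_k)+1\le b_k^{-1}$; clearing denominators and cancelling the positive factor $1-b_k$ reduces it to $b_k\le b_{k-1}$, which holds. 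This closes the induction, giving $\|Y^k\|_\infty\le b_{k-1}^{-1}\rho$ for every $k\ge 1$ --- the asserted bound. (Since $b_{k-1}^{-1}\le k^\alpha/(1-\alpha)$ by the mean value theorem and $k\Delta t\le K$, the bound also reads $\|Y^k\|_\infty=O(\Delta t+\Delta x)$, so the scheme converges.)

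The main obstacle is the reformulation in the second paragraph: turning the recursion into an honest convex combination and, within it, bounding the \emph{nonlinear} Roe flux by a discrete maximum principle. The stability proof writes $\bigl|\varepsilon_i^k-r[f(\varepsilon_{i+1/2}^k)-f(\varepsilon_{i-1/2}^k)]\bigr|\le|\varepsilon_i^k|$ without exhibiting the monotonicity/CFL hypothesis that makes it true, and the same mechanism --- now carrying the weight $1-b_1$ --- must be pinned down here; for the full system this leans on the Roe-averaged Jacobian together with the entropy fix. The second delicate point, although elementary, is the closing inequality $b_{k-1}^{-1}(1-b_k)+1\le b_k^{-1}$: this is precisely what forces the factor $b_{k-1}^{-1}$ in the statement and is the only place the specific arithmetic of the $L_1$ weights enters. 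Everything else --- the Abel summation, the triangle inequalities, and the consistency estimate $|R_{i,k+1}|\le C(\Delta t^{1+\alpha}+\Delta t^\alpha\Delta x)$ itself (Taylor expansion of the $L_1$ quadrature error plus the $O(\Delta x)$ spatial flux error, scaled by $\Delta t^\alpha\Gamma(2-\alpha)$) --- is routine.
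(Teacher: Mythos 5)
Your proof is correct in substance but follows a genuinely different route from the paper's. The paper drops the entire history sum in one stroke, via the chain $\mid \eta_i^k - \sum_{j}(b_{j+1}-b_j)(\eta_i^{j+1}-\eta_i^j) - r[\cdots] + R_{i,k+1}\mid \;\leq\; \mid\eta_i^k\mid + \mid R_{i,k+1}\mid$, and then closes the induction by \emph{assuming} $b_{k-1}^{-1}+b_0^{-1}\leq b_k^{-1}$. Both steps are problematic: the first uses a reverse-triangle-type bound that is not valid as written, and the second assumption actually fails for large $k$ (since $b_k^{-1}-b_{k-1}^{-1}\sim \alpha k^{\alpha-1}/(1-\alpha)\to 0$ while $b_0^{-1}=1$). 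Your Abel-summation reformulation into the convex combination $\eta_i^{k+1}=\sum_{l=1}^{k-1}(b_{k-l}-b_{k-l+1})\eta_i^l+(1-b_1)[\cdots]+R_{i,k+1}$, with coefficients summing to $1-b_k$, is the standard $L_1$-scheme convergence argument; it replaces the paper's false arithmetic closure with the correct inequality $(1-b_k)b_{k-1}^{-1}+1\leq b_k^{-1}$, which you rightly reduce to $b_k\leq b_{k-1}$. In short, your route not only differs but repairs the paper's two defective steps, at the price of needing the monotonicity of the weights and the Abel rearrangement. The one point you share with the paper rather than resolve is the discrete maximum principle for the nonlinear Roe flux term, i.e.\ the claim that $\eta_i^k-\tfrac{r}{1-b_1}\bigl(f(\eta_{i+1/2}^k)-f(\eta_{i-1/2}^k)\bigr)$ is bounded by $\|Y^k\|_\infty$ under a CFL condition: for a $4\times 4$ system with Roe averaging and an entropy fix this does not follow componentwise from a scalar monotonicity argument, and neither the paper's stability proof nor your sketch supplies it. You at least state the hypothesis explicitly, which the paper does not, but a complete proof would need to justify it (e.g.\ by diagonalising the Roe matrix and imposing the CFL condition on its eigenvalues).
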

\begin{proof}
	Let $\mid \eta_L^k \mid = \max\limits_{1 \leq i \leq M-1} \mid \eta_i^k \mid,~ k=1,2,\ldots,N,$ such that 
	\begin{eqnarray*}
		\parallel Y^1 \parallel_\infty &=& \mid \eta_i^1 \mid, \\
		&=& \mid R_{i,1} \mid, \\
		&\leq& C\left(\Delta t^{1+\alpha} + \Delta t^\alpha \Delta x\right), \\
		&\leq& b_0^{-1}C\left(\Delta t^{1+\alpha} + \Delta t^\alpha \Delta x\right).
	\end{eqnarray*}
	Suppose $\parallel Y^j \parallel_\infty \leq b_{j-1}^{-1}C\left(\Delta t^{1+\alpha} + \Delta t^\alpha \Delta x\right),~ j=1,2,\ldots,k,~ \text{and}~ b_k^{-1} \geq b_j^{-1}~ (j=0,1,2,\ldots,k).$ We then obtain
	\begin{eqnarray*}
		\parallel Y^{k+1} \parallel_\infty &=& \mid \eta_L^{k+1} \mid, \\
		&=& \mid \eta_i^k - \sum_{j=0}^{k-1}\left(b_{j+1} - b_j\right) \left(\eta_i^{j+1} - \eta_i^j \right) 
		- r\left( f(\eta_{i+1/2}^k) - f(\eta_{i-1/2}^k)\right) + R_{i,k+1} \mid,\\
		&\leq& \mid \eta_i^k \mid - \mid \sum_{j=0}^{k-1}\left(b_{j+1} - b_j\right) \left(\eta_i^{j+1} - \eta_i^j \right) 
		+ r\left( f(\eta_{i+1/2}^k) - f(\eta_{i-1/2}^k)\right) \mid + \mid R_{i,k+1} \mid,\\
		&\leq& \mid \eta_i^k \mid + \mid R_{i,k+1}\mid, \\
		&\leq& \parallel Y^k \parallel_\infty  + C\left(\Delta t^{1+\alpha} + \Delta t^\alpha \Delta x\right), \\
		&\leq& b_{k-1}^{-1}C\left(\Delta t^{1+\alpha} + \Delta t^\alpha \Delta x\right) + b_0^{-1}C\left(\Delta t^{1+\alpha} + \Delta t^\alpha \Delta x\right), ~\text{since}~ b_0 ~ \text{is} ~ 1, \\
		&\leq& (b_{k-1}^{-1} + b_{0}^{-1})C\left(\Delta t^{1+\alpha} + \Delta t^\alpha \Delta x\right), \\
		&\leq& b_{k}^{-1}C\left(\Delta t^{1+\alpha} + \Delta t^\alpha \Delta x\right),
	\end{eqnarray*}
	where $b_{k-1}^{-1} + b_{0}^{-1}$ is assumed to be less than or equal to $b_k^{-1}$ 
\end{proof}
Because 
\begin{eqnarray}
	\lim\limits_{k\rightarrow \infty} \dfrac{b_k^{-1}}{k^\alpha} &=& \lim\limits_{k\rightarrow \infty} \dfrac{k^{-\alpha}}{(k+1)^{1-\alpha} - k^{1-\alpha}}, ~ \text{substituting}~ b_k \nonumber \\
	&=& \lim\limits_{k\rightarrow \infty} \dfrac{k^{-\alpha}}{  k^{1-\alpha} \left[(1+1/k)^{1-\alpha} - 1 \right] }, ~ \text{factoring}~ k, \nonumber \\
	&=& \lim\limits_{k\rightarrow \infty} \dfrac{k^{-1}}{  (1+1/k)^{1-\alpha} - 1 }, ~ \text{dividing through by}~ k^{1-\alpha}, \nonumber \\
	&=& \lim\limits_{k\rightarrow \infty} \dfrac{k^{-1}}{ \left[1 + (1-\alpha)(1/k) \right] - 1 }, ~ \text{binomial expansion}, \nonumber \\
	&=& \dfrac{1}{1 - \alpha}, \label{bk}
\end{eqnarray}
there exists a constant $C_1 = \dfrac{1}{1 - \alpha} >0$ such that 
\begin{equation}
	b_k^{-1} \leq C_1 k^{\alpha},
\end{equation} 
by making $b_k^{-1}$ of (\ref{bk}), the subject.
Since $k \Delta t < T$ is bounded we deduce 
\begin{eqnarray*}
	\mid U_i^k - U(x_i,t_k) \mid & \leq & \mid \eta_L^k \mid, \\
	& \leq & b_k^{-1}C \Delta t^\alpha \left(\Delta t + \Delta x\right), \\
	& \leq & C_1 k^{\alpha} C \Delta t^\alpha \left(\Delta t + \Delta x\right), ~\text{substituting for} ~ b_k^{-1} \\
	&\leq & CC_1 \left(k \Delta t\right)^{\alpha} \left(\Delta t + \Delta x\right), \\
	& \leq & \hat{C} \left(\Delta t + \Delta x\right), 
\end{eqnarray*}
where $ \hat{C} = CC_1 \left(k \Delta t\right)^{\alpha}.$ Hence, the scheme is convergent.

\section{Numerical results}
Under this section, numerical solutions of
the proposed model (\ref{TimeFractionalModel}), with initial condition (\ref{InitialCondition}) and boundary conditions (\ref{BoundaryConditions}), for varying values of
the fractional derivative $\alpha = 1, 0.9, 0.8, 0.7$ are presented. We describe vehicles’ density and velocity distributions when motorcycles’ proportion is set to 20\% and 90\%. Roe’s numerical scheme is utilized in which periodic boundary conditions are applied in order
to represent a circular road. In an attempt to dissolve discontinuities at boundaries, an entropy fix is applied
to the Roe’s numerical scheme. Consistency, stability and convergence are established. Table \ref{ParameterValues} shows parameter values that are used for simulations. Note that when $\delta$ takes on values $0$ and $1,$ results become unstable (undefined). Figures \ref{FreewayDelta90One} - \ref{CongestedDelta20Two} display simulation results of the proposed model.          

\vspace{-0.1cm}

\begin{table}[H]
	\centering
	\caption{Simulation parameters.}
	\begin{tabular}{l|c|r}
		\hline
		\textbf{Description} & \textbf{Value} & Source\\ \hline
		Length of the road & 500m & \\ 
		Road step, h & 5 m & \cite{khan2019macroscopic}\\
		Time step & 0.05 s &  \\
		Relaxation time,$\tau_m,~ \tau_c$ & 3 s, 5 s & \cite{mohan2021multi}\\
		Equilibrium velocity, $v_{em}, v_{ec}$ & Greenshields & \cite{khan2019macroscopic}\\
		Maximum normalized density & $\rho=\rho_m+\rho_c = 1$ & \cite{khan2019macroscopic}\\
		Maximum speed, $v_{m\max}$ & 11m/s & \cite{mohan2017HTModel} \\
		Maximum speed, $v_{c\max}$ & 13.8m/s & \cite{Ministryofworksandtransport} \\
		Maximum area occupancy, $AO_m^{\max}$ & 0.85 & \cite{mohan2013heterogeneous,mohan2017HTModel} \\
		Maximum area occupancy, $AO_c^{\max}$ & 0.74 & \cite{mohan2013heterogeneous,mohan2017HTModel} \\		
		$\gamma_i,~ i=m,c$ & 2.23, 2.12 & \cite{mohan2013heterogeneous,mohan2017HTModel}\\
		Width of the road & 12m & \cite{Ministryofworksandtransport} \\
		Width of a car & 1.6m & \cite{arasan2008measuring,mohan2021multi}\\
		Vehicle class proportion, $\delta$ & 20\%, 90\% & \\
		Length of a car & 4m & \cite{arasan2008measuring,mohan2021multi}\\
		Length of a motorcycle & 1.8m & \cite{arasan2008measuring}\\
		Simulation time, T & 60 seconds & \cite{khan2019macroscopic} \\ \hline
	\end{tabular}
	\label{ParameterValues}
\end{table}

\subsection{Freeway traffic on a roundabout}
In his subsection, we consider a free traffic flow with motorcycles proportion $\delta$ initially fixed at $90\%$ for $\alpha = 1, 0.9, 0.8, 0.7$ and $0s, 1s, 20s, 40s$ and $60s.$ The initial total density of traffic flow denoted $\rho_0,$ is set to be 
\begin{eqnarray}
	\rho_0 &=&
	\begin{cases}
		0.1, &\text{for}~ x < 100 \\
		0.2,& \text{for}~ x \geq 100.
	\end{cases}
\end{eqnarray}
Since $\delta = 90\%,$ it follows that $\rho_m = 0.9 \rho_0$ and $\rho_c = 0.1 \rho_0.$ The initial density and velocity distributions of the respective vehicle classes are plotted in Figures \ref{FreewayDensitymDelta90_0s} -  \ref{FreewayVelocitycDelta90_0s}. A shock wave develops at 1s for both vehicle classes and becomes smooth thereafter (see Figures \ref{FreewayDensitymDelta90_20s} - \ref{FreewayVelocitycDelta90_60s}). It is observed from Figures \ref{FreewayDelta90One} and \ref{FreewayDelta90Two} that values for integer order model fluctuate so much while those for fractional order model are moderated as time progresses. 

\begin{figure}[H]
	\vspace{0pt}
	\centering
	\begin{subfigure}[t]{0.32\textwidth}
		\vspace{0pt}
		\includegraphics[width=\linewidth]{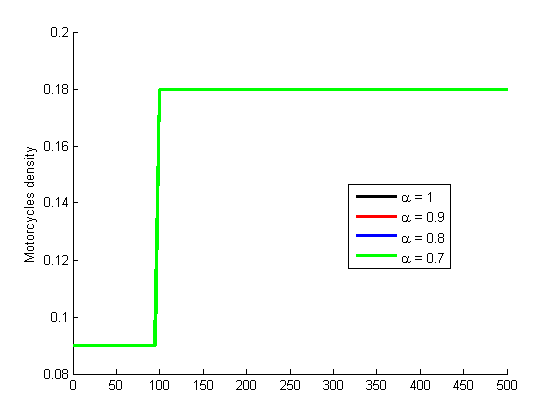}
		\caption{$\rho_m$ vs x at $T=0s$}\label{FreewayDensitymDelta90_0s}
	\end{subfigure}
	\begin{subfigure}[t]{0.32\textwidth}
		\vspace{0pt}
		\includegraphics[width=\linewidth]{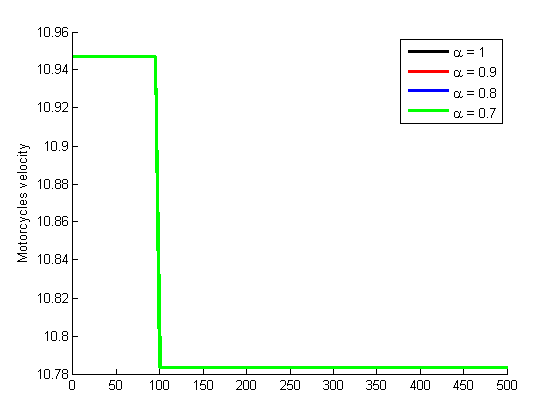}
		\caption{$v_m$ vs x at $T=0s$}\label{FreewayVelocitymDelta90_0s}
	\end{subfigure}
	\begin{subfigure}[t]{0.32\textwidth}
		\vspace{0pt}
		\includegraphics[width=\linewidth]{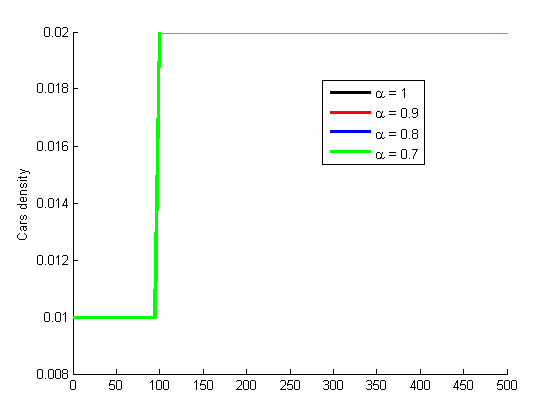}
		\caption{$\rho_c$ vs x at $T=0s$}\label{FreewayDensitycDelta90_0s}
	\end{subfigure}
	\begin{subfigure}[t]{0.32\textwidth}
		\vspace{0pt}
		\includegraphics[width=\linewidth]{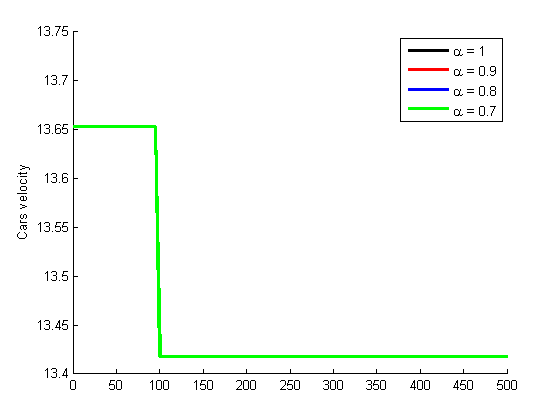}
		\caption{$v_c$ vs x at $T=0s$}\label{FreewayVelocitycDelta90_0s}
	\end{subfigure}
	\begin{subfigure}[t]{0.32\textwidth}
		\vspace{0pt}
		\includegraphics[width=\linewidth]{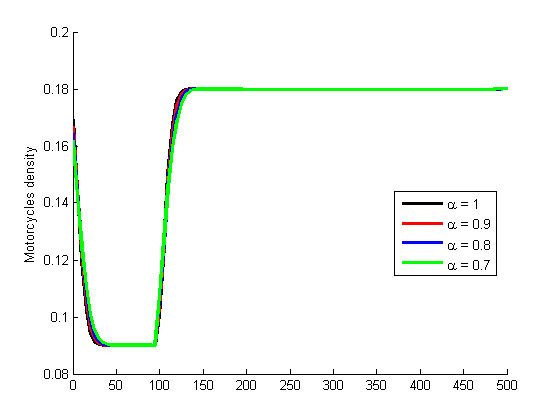}
		\caption{$\rho_m$ vs x at $T=1s$}\label{FreewayDensitymDelta90_1s}
	\end{subfigure}
	\begin{subfigure}[t]{0.32\textwidth}
		\vspace{0pt}
		\includegraphics[width=\linewidth]{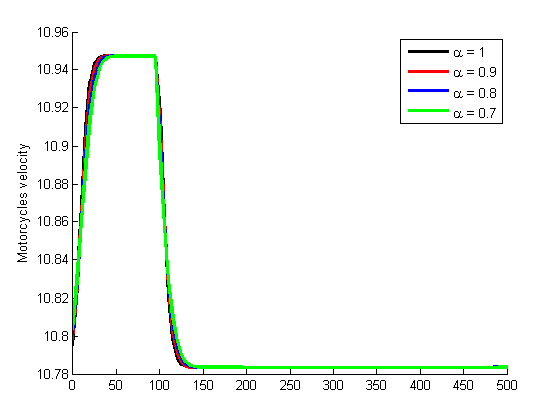}
		\caption{$v_m$ vs x at $T=1s$}\label{FreewayVelocitymDelta90_1s}
	\end{subfigure}
	\begin{subfigure}[t]{0.32\textwidth}
		\vspace{0pt}
		\includegraphics[width=\linewidth]{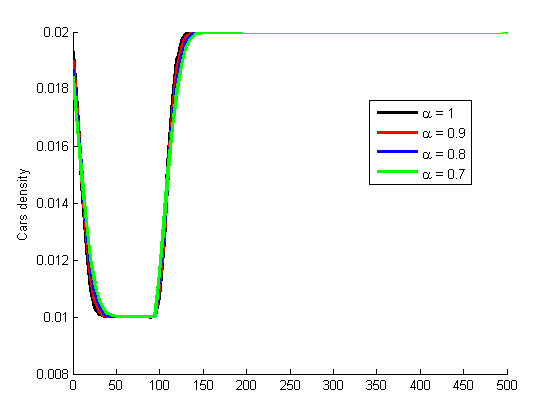}
		\caption{$\rho_c$ vs x at $T=1s$}\label{FreewayDensitycDelta90_1s}
	\end{subfigure}
	\begin{subfigure}[t]{0.32\textwidth}
		\vspace{0pt}
		\includegraphics[width=\linewidth]{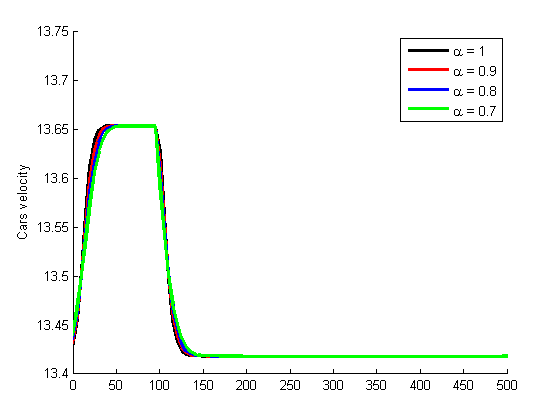}
		\caption{$v_c$ vs x at $T=1s$}\label{FreewayVelocitycDelta90_1s}
	\end{subfigure}
	\begin{subfigure}[t]{0.32\textwidth}
		\vspace{0pt}
		\includegraphics[width=\linewidth]{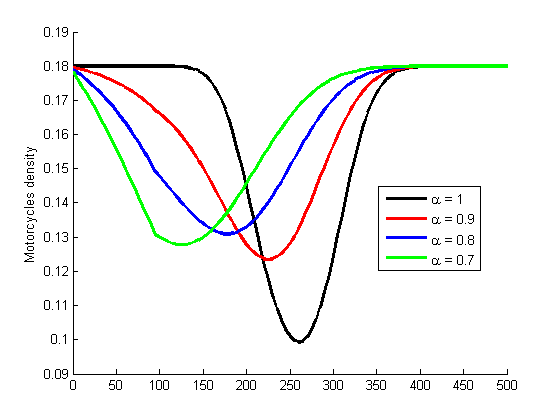}
		\caption{$\rho_m$ vs x at $T=20s$}\label{FreewayDensitymDelta90_20s}
	\end{subfigure}
	\begin{subfigure}[t]{0.32\textwidth}
		\vspace{0pt}
		\includegraphics[width=\linewidth]{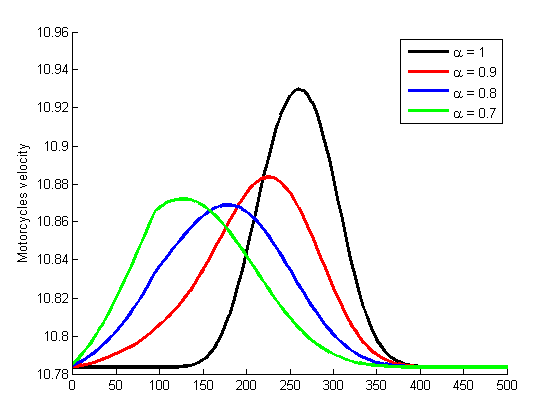}
		\caption{$v_m$ vs x at $T=20s$}\label{FreewayVelocitymDelta90_20s}
	\end{subfigure}
	\begin{subfigure}[t]{0.32\textwidth}
		\vspace{0pt}
		\includegraphics[width=\linewidth]{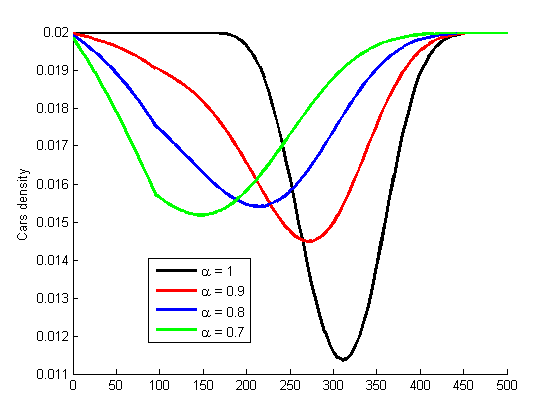}
		\caption{$\rho_c$ vs x at $T=20s$}\label{FreewayDensitycDelta90_20s}
	\end{subfigure}
	\begin{subfigure}[t]{0.32\textwidth}
		\vspace{0pt}
		\includegraphics[width=\linewidth]{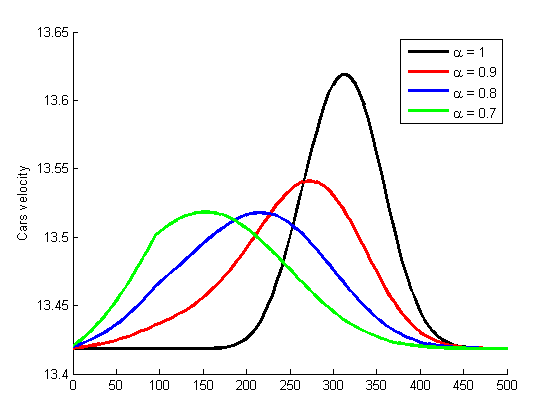}
		\caption{$v_c$ vs x at $T=20s$}\label{FreewayVelocitycDelta90_20s}
	\end{subfigure}
	\caption{Densities and velocities of the proposed model on a $500 m$ freeway circular road when $ \alpha = 1, 0.9, 0.8, 0.7,~ \delta = 90\%,$ at $T = 0s, 1s, 20s.$} \label{FreewayDelta90One} 
	\vspace{-0.1cm}
\end{figure}

\begin{figure}[H]
	\vspace{0pt}
	\centering
	\begin{subfigure}[t]{0.32\textwidth}
		\vspace{0pt}
		\includegraphics[width=\linewidth]{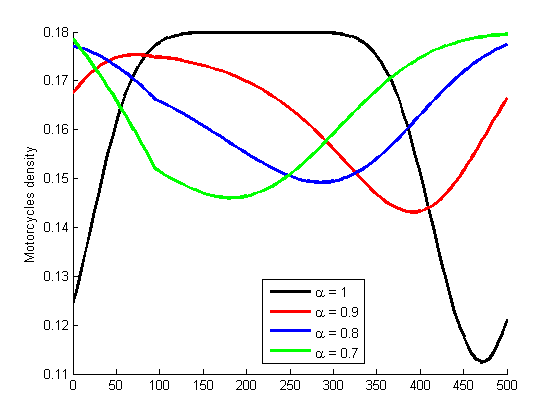}
		\caption{$\rho_m$ vs x at $T=40s$}\label{FreewayDensitymDelta90_40s}
	\end{subfigure}
	\begin{subfigure}[t]{0.32\textwidth}
		\vspace{0pt}
		\includegraphics[width=\linewidth]{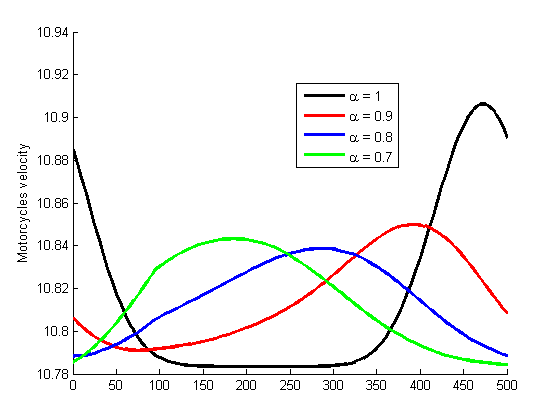}
		\caption{$v_m$ vs x at $T=40s$}\label{FreewayVelocitymDelta90_40s}
	\end{subfigure}
	\begin{subfigure}[t]{0.32\textwidth}
		\vspace{0pt}
		\includegraphics[width=\linewidth]{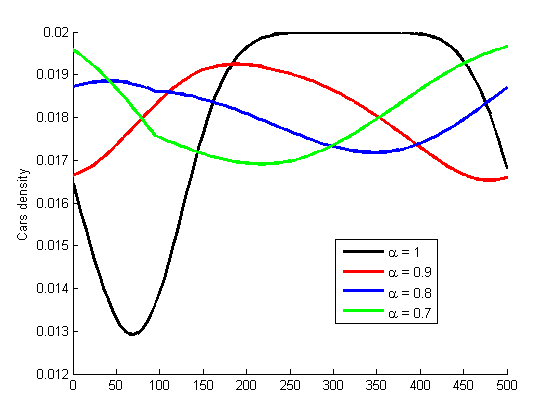}
		\caption{$\rho_c$ vs x at $T=40s$}\label{FreewayDensitycDelta90_40s}
	\end{subfigure}
	\begin{subfigure}[t]{0.32\textwidth}
		\vspace{0pt}
		\includegraphics[width=\linewidth]{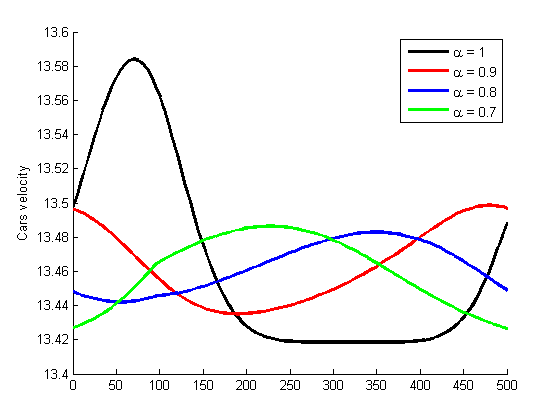}
		\caption{$v_c$ vs x at $T=40s$}\label{FreewayVelocitycDelta90_40s}
	\end{subfigure}
	\begin{subfigure}[t]{0.32\textwidth}
		\vspace{0pt}
		\includegraphics[width=\linewidth]{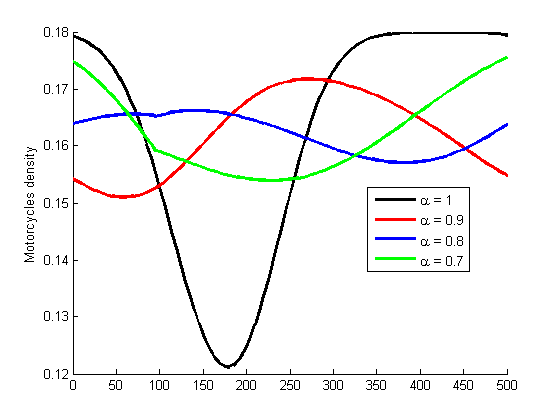}
		\caption{$\rho_m$ vs x at $T=60s$}\label{FreewayDensitymDelta90_60s}
	\end{subfigure}
	\begin{subfigure}[t]{0.32\textwidth}
		\vspace{0pt}
		\includegraphics[width=\linewidth]{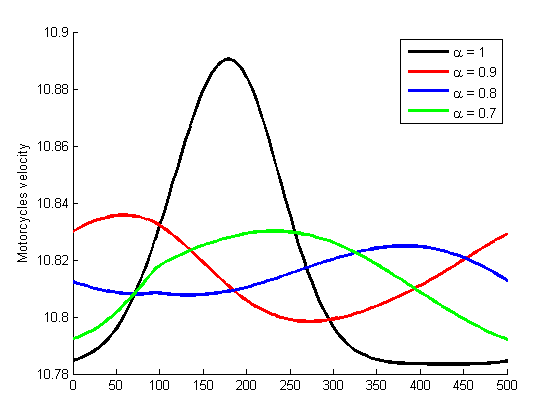}
		\caption{$v_m$ vs x at $T=60s$}\label{FreewayVelocitymDelta90_60s}
	\end{subfigure}
	\begin{subfigure}[t]{0.32\textwidth}
		\vspace{0pt}
		\includegraphics[width=\linewidth]{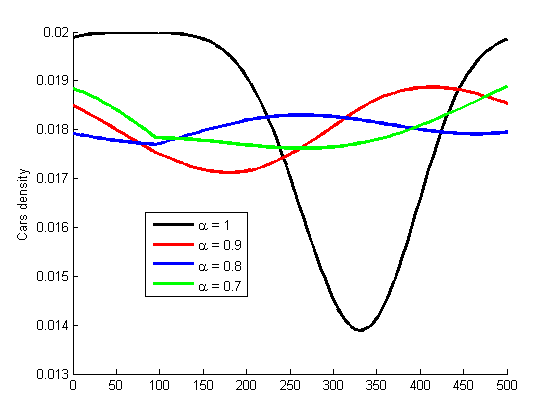}
		\caption{$\rho_c$ vs x at $T=60s$}\label{FreewayDensitycDelta90_60s}
	\end{subfigure}
	\begin{subfigure}[t]{0.32\textwidth}
		\vspace{0pt}
		\includegraphics[width=\linewidth]{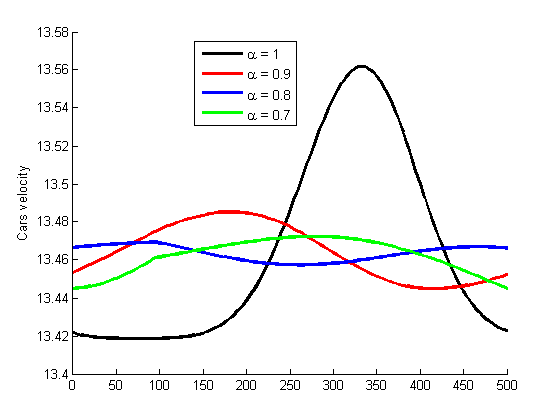}
		\caption{$v_c$ vs x at $T=60s$}\label{FreewayVelocitycDelta90_60s}
	\end{subfigure}
	\caption{Densities and velocities of the proposed model on a $500 m$ freeway circular road when $ \alpha = 1, 0.9, 0.8, 0.7,~ \delta = 90\%,$ at $T = 40s, 60s.$} \label{FreewayDelta90Two} 
	\vspace{-0.1cm}
\end{figure}

Next, $\delta$ is set to $20\%,$ such that $\rho_m = 0.2 \rho_0$ and $\rho_c = 0.8 \rho_0,$ for motorcycles and cars, respectively. The plot of such initial density profile is shown in Figure \ref{FreewayDensitymDelta20_0s} and \ref{FreewayDensitycDelta20_0s} with corresponding velocities in Figures \ref{FreewayVelocitymDelta20_0s} and \ref{FreewayVelocitycDelta20_0s}.
Similar results as in Figures \ref{FreewayDelta90One} and \ref{FreewayDelta90Two} are observed when $\delta = 20\%$ (see Figures \ref{FreewayDensitymDelta20_1s} - \ref{FreewayVelocitycDelta20_60s}). 
Different from the previous case when $\delta = 90\%,$ both vehicle classes move with slightly lower velocities.

\begin{figure}[H]
	\vspace{0pt}
	\centering
	\begin{subfigure}[t]{0.32\textwidth}
		\vspace{0pt}
		\includegraphics[width=\linewidth]{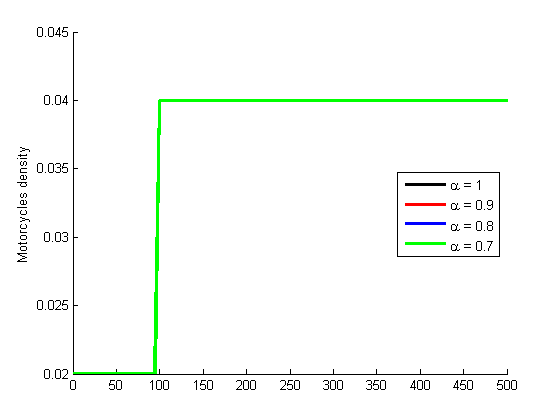}
		\caption{$\rho_m$ vs x at $T=0s$}\label{FreewayDensitymDelta20_0s}
	\end{subfigure}
	\begin{subfigure}[t]{0.32\textwidth}
		\vspace{0pt}
		\includegraphics[width=\linewidth]{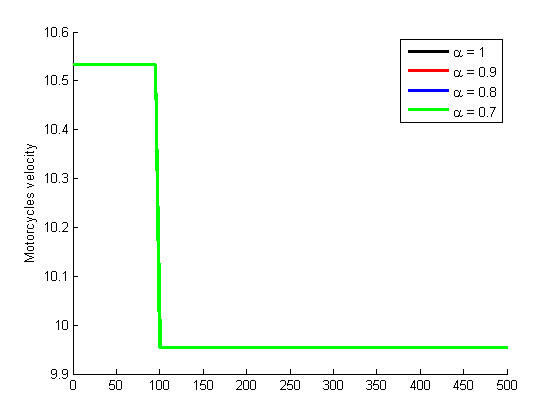}
		\caption{$v_m$ vs x at $T=0s$}\label{FreewayVelocitymDelta20_0s}
	\end{subfigure}
	\begin{subfigure}[t]{0.32\textwidth}
		\vspace{0pt}
		\includegraphics[width=\linewidth]{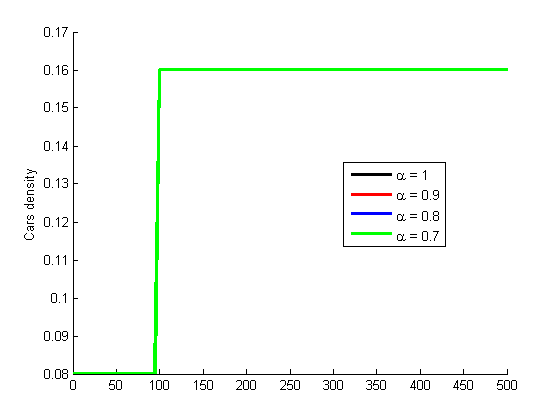}
		\caption{$\rho_c$ vs x at $T=0s$}\label{FreewayDensitycDelta20_0s}
	\end{subfigure}
	\begin{subfigure}[t]{0.32\textwidth}
		\vspace{0pt}
		\includegraphics[width=\linewidth]{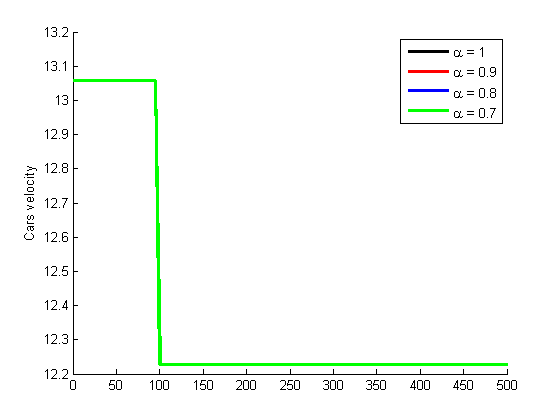}
		\caption{$v_c$ vs x at $T=0s$}\label{FreewayVelocitycDelta20_0s}
	\end{subfigure}
	\begin{subfigure}[t]{0.32\textwidth}
		\vspace{0pt}
		\includegraphics[width=\linewidth]{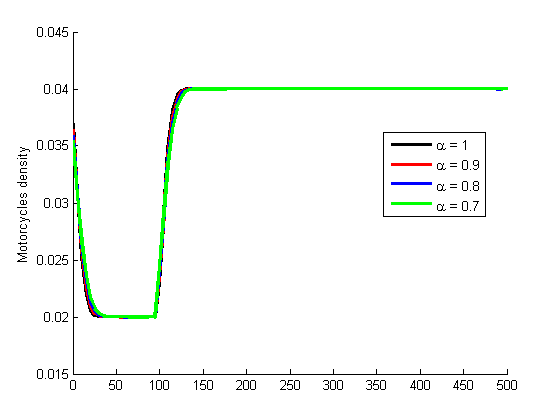}
		\caption{$\rho_m$ vs x at $T=1s$}\label{FreewayDensitymDelta20_1s}
	\end{subfigure}
	\begin{subfigure}[t]{0.32\textwidth}
		\vspace{0pt}
		\includegraphics[width=\linewidth]{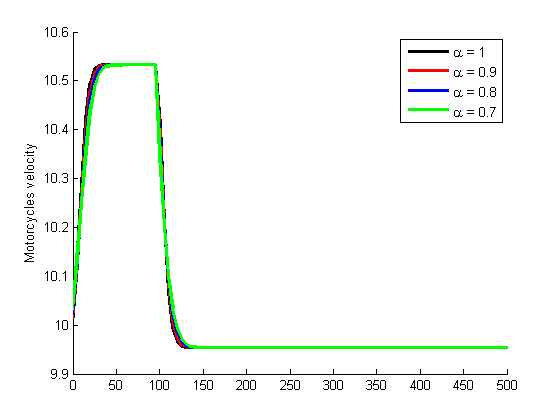}
		\caption{$v_m$ vs x at $T=1s$}\label{FreewayVelocitymDelta20_1s}
	\end{subfigure}
	\begin{subfigure}[t]{0.32\textwidth}
		\vspace{0pt}
		\includegraphics[width=\linewidth]{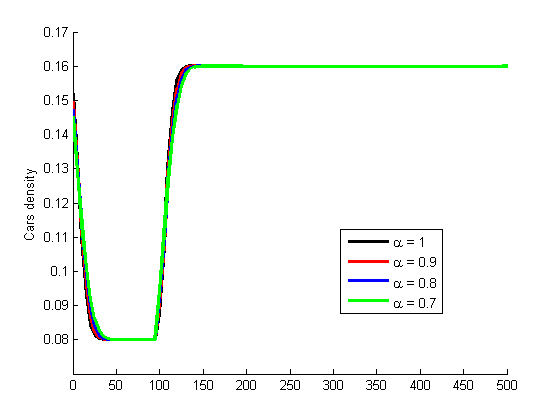}
		\caption{$\rho_c$ vs x at $T=1s$}\label{FreewayDensitycDelta20_1s}
	\end{subfigure}
	\begin{subfigure}[t]{0.32\textwidth}
		\vspace{0pt}
		\includegraphics[width=\linewidth]{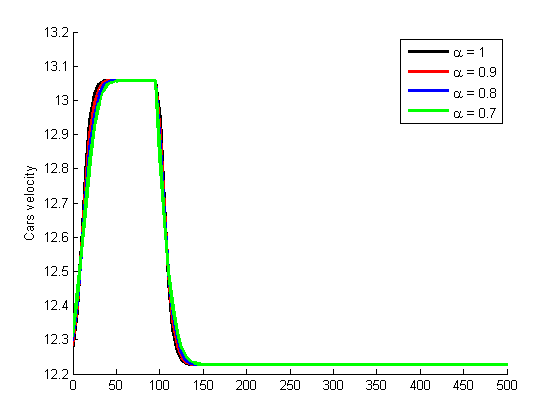}
		\caption{$v_c$ vs x at $T=1s$}\label{FreewayVelocitycDelta20_1s}
	\end{subfigure}
	\begin{subfigure}[t]{0.32\textwidth}
		\vspace{0pt}
		\includegraphics[width=\linewidth]{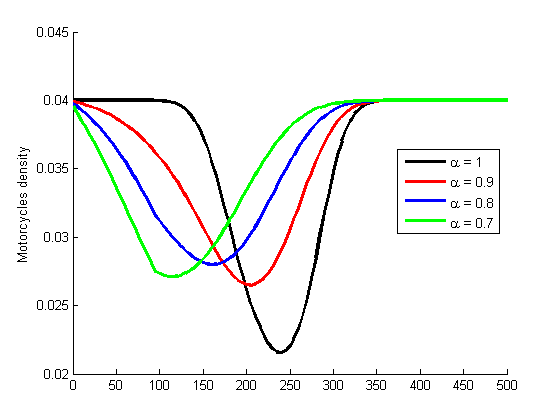}
		\caption{$\rho_m$ vs x at $T=20s$}\label{FreewayDensitymDelta20_20s}
	\end{subfigure}
	\begin{subfigure}[t]{0.32\textwidth}
		\vspace{0pt}
		\includegraphics[width=\linewidth]{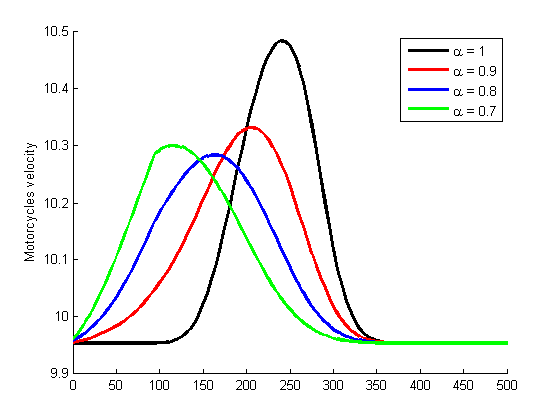}
		\caption{$v_m$ vs x at $T=20s$}\label{FreewayVelocitymDelta20_20s}
	\end{subfigure}
	\begin{subfigure}[t]{0.32\textwidth}
		\vspace{0pt}
		\includegraphics[width=\linewidth]{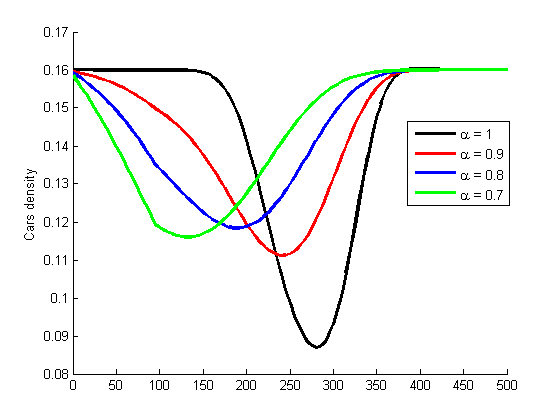}
		\caption{$\rho_c$ vs x at $T=20s$}\label{FreewayDensitycDelta20_20s}
	\end{subfigure}
	\begin{subfigure}[t]{0.32\textwidth}
		\vspace{0pt}
		\includegraphics[width=\linewidth]{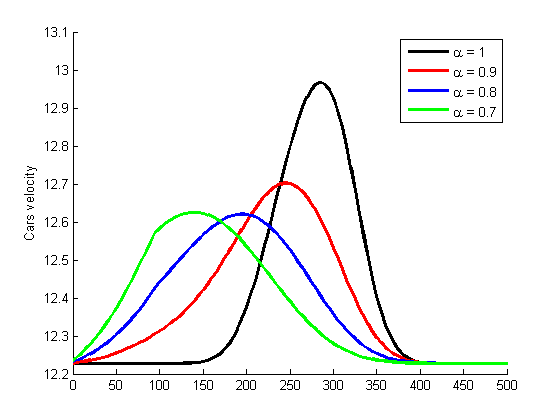}
		\caption{$v_c$ vs x at $T=20s$}\label{FreewayVelocitycDelta20_20s}
	\end{subfigure}
	\caption{Densities and velocities of the proposed model on a $500 m$ freeway circular road when $ \alpha = 1, 0.9, 0.8, 0.7,~ \delta = 20\%,$ at $T = 0s, 1 s, 20s.$} \label{FreewayDelta20One} 
	\vspace{-0.1cm}
\end{figure}

\begin{figure}[H]
	\vspace{0pt}
	\centering
	\begin{subfigure}[t]{0.32\textwidth}
		\vspace{0pt}
		\includegraphics[width=\linewidth]{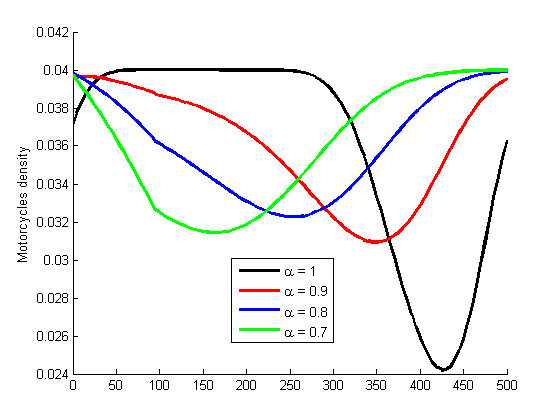}
		\caption{$\rho_m$ vs x at $T=40s$}\label{FreewayDensitymDelta20_40s}
	\end{subfigure}
	\begin{subfigure}[t]{0.32\textwidth}
		\vspace{0pt}
		\includegraphics[width=\linewidth]{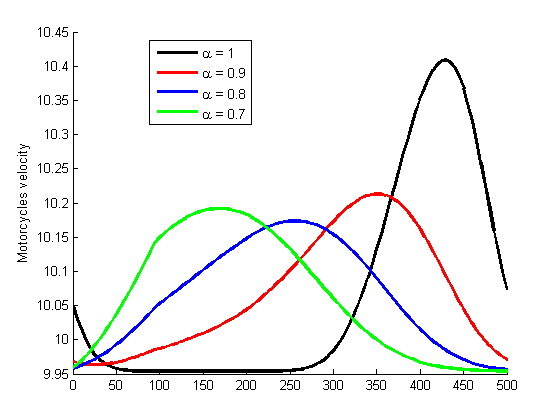}
		\caption{$v_m$ vs x at $T=40s$}\label{FreewayVelocitymDelta20_40s}
	\end{subfigure}
	\begin{subfigure}[t]{0.32\textwidth}
		\vspace{0pt}
		\includegraphics[width=\linewidth]{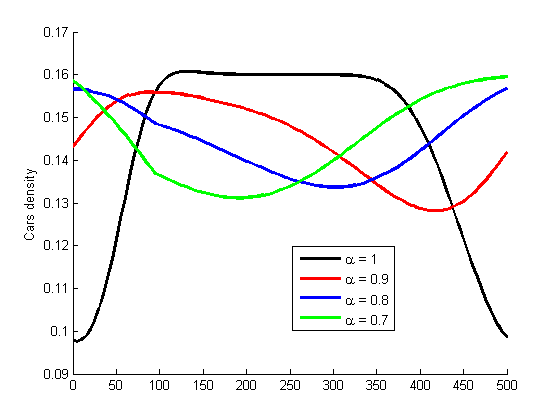}
		\caption{$\rho_c$ vs x at $T=40s$}\label{FreewayDensitycDelta20_40s}
	\end{subfigure}
	\begin{subfigure}[t]{0.32\textwidth}
		\vspace{0pt}
		\includegraphics[width=\linewidth]{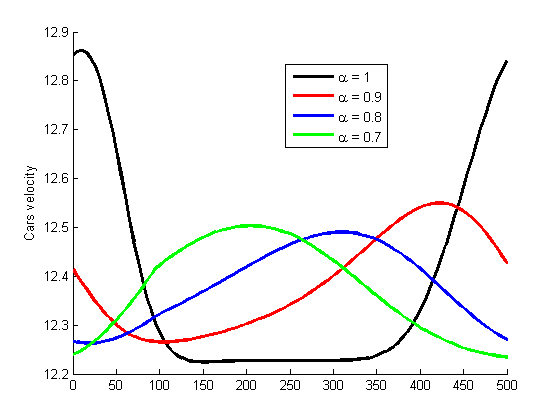}
		\caption{$v_c$ vs x at $T=40s$}\label{FreewayVelocitycDelta20_40s}
	\end{subfigure}
	\begin{subfigure}[t]{0.32\textwidth}
		\vspace{0pt}
		\includegraphics[width=\linewidth]{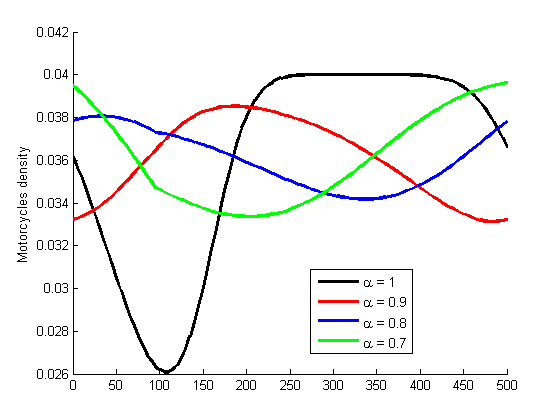}
		\caption{$\rho_m$ vs x at $T=60s$}\label{FreewayDensitymDelta20_60s}
	\end{subfigure}
	\begin{subfigure}[t]{0.32\textwidth}
		\vspace{0pt}
		\includegraphics[width=\linewidth]{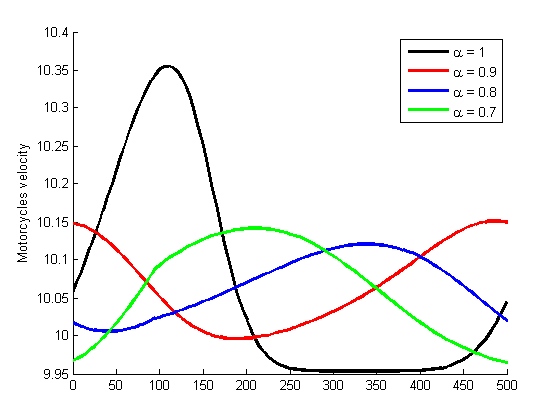}
		\caption{$v_m$ vs x at $T=60s$}\label{FreewayVelocitymDelta20_60s}
	\end{subfigure}
	\begin{subfigure}[t]{0.32\textwidth}
		\vspace{0pt}
		\includegraphics[width=\linewidth]{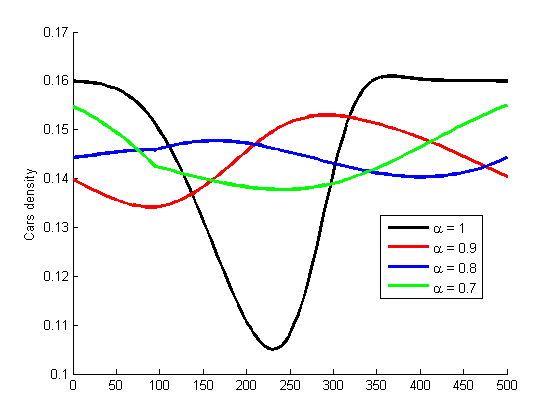}
		\caption{$\rho_c$ vs x at $T=60s$}\label{FreewayDensitycDelta20_60s}
	\end{subfigure}
	\begin{subfigure}[t]{0.32\textwidth}
		\vspace{0pt}
		\includegraphics[width=\linewidth]{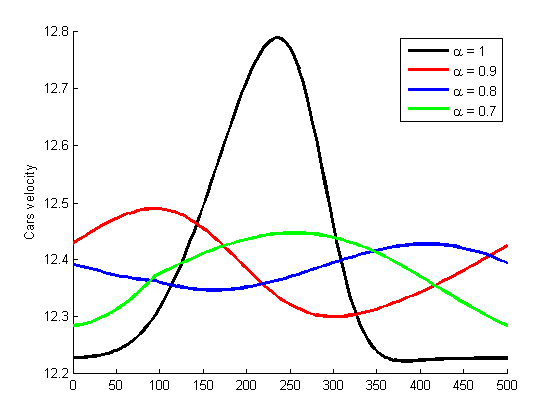}
		\caption{$v_c$ vs x at $T=60s$}\label{FreewayVelocitycDelta20_60s}
	\end{subfigure}
	\caption{Densities and velocities of the proposed model on a $500 m$ freeway circular road when $ \alpha = 1, 0.9, 0.8, 0.7,~ \delta = 20\%,$ at $T = 20s, 40s.$} \label{FreewayDelta20Two} 
	\vspace{-0.1cm}
\end{figure}
\subsection{Congested traffic on a roundabout}
Here we study a case for congested $500 m$ circular road with initial density $\rho_0,$ given by
\begin{eqnarray} 
	\rho_0 &=&
	\begin{cases}
		0.1, &\text{for}~ x \leq 130 \\
		0.8,& \text{for}~ 150 < x < 180 \\
		0.1, & \text{for}~ x \geq 180.
	\end{cases}
\end{eqnarray}
The proposed model densities and velocities at time, $T = 0s, 1s, 20s, 40s$ and $60s$ with $\delta$ initially fixed at $90\%$ are displayed in Figures \ref{CongestedVelocityDelta90One} and \ref{CongestedVelocityDelta90Two}. Shock wave with decreasing amplitude disappears after $20 s.$ As time progresses, densities and velocities of both vehicle classes become almost uniform moderated. On the other hand, shock wave with higher amplitude appears at all selected times for the integer order model. Densities and velocities of both vehicle classes are observed to be non uniform throughout the observation period. Higher velocities close to maximum values and low corresponding densities are observed for both vehicle classes as compared to the a case when $\delta = 20\%$ Motorcycles density is maintained at low levels. Cars density is maintained
at low levels.

\begin{figure}[H]
	
	\vspace{0pt}
	
	\centering
	\begin{subfigure}[t]{0.32\textwidth}
		\vspace{0pt}
		\includegraphics[width=\linewidth]{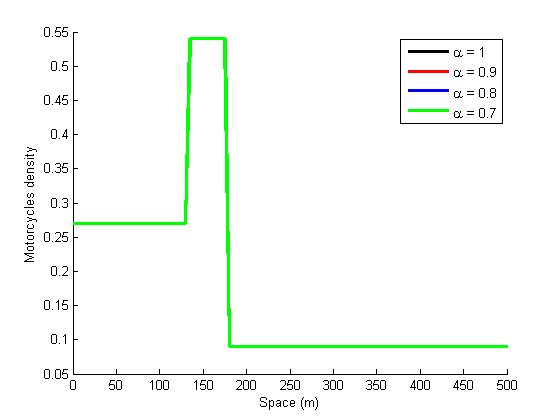}
		\caption{$\rho_m$ vs x at $T=0$}\label{CongestedDensitymDelta90_0s}
	\end{subfigure}
	\begin{subfigure}[t]{0.32\textwidth}
		\vspace{0pt}
		\includegraphics[width=\linewidth]{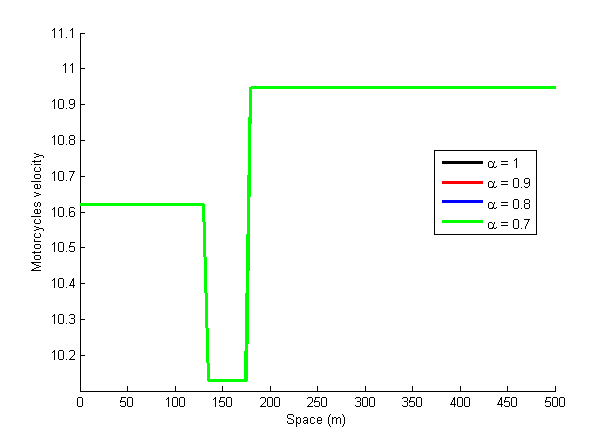}
		\caption{$v_m$ vs x at $T=0$}\label{CongestedVelocitymDelta90_0s}
	\end{subfigure}
	\begin{subfigure}[t]{0.32\textwidth}
		\vspace{0pt}
		\includegraphics[width=\linewidth]{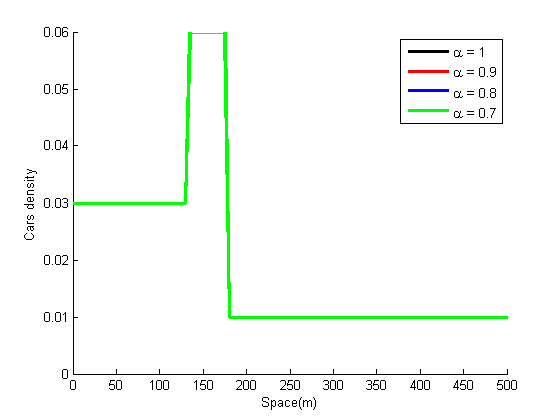}
		\caption{$\rho_c$ vs x at $T=0$}\label{CongestedDensitycDelta90_0s}
	\end{subfigure}
	\begin{subfigure}[t]{0.32\textwidth}
		\vspace{0pt}
		\includegraphics[width=\linewidth]{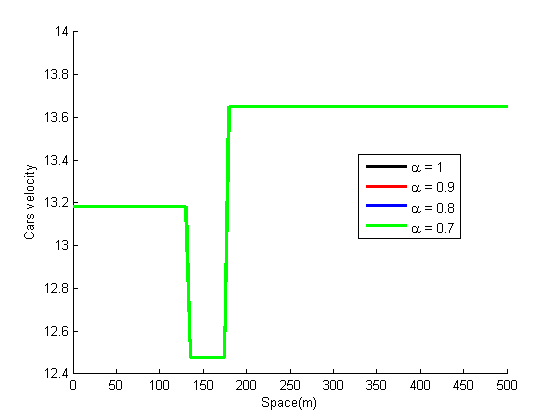}
		\caption{$v_c$ vs x at $T=0$}\label{CongestedVelocitycDelta90_0s}
	\end{subfigure}
	\begin{subfigure}[t]{0.32\textwidth}
		\vspace{0pt}
		\includegraphics[width=\linewidth]{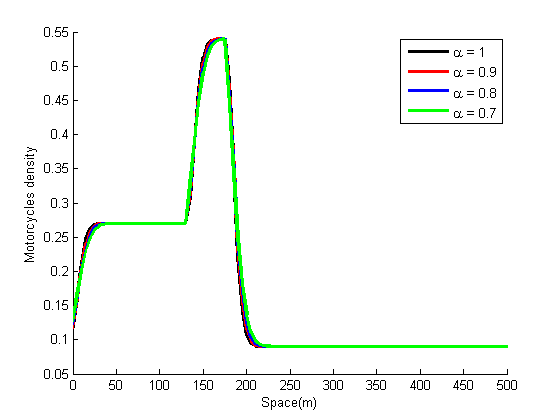}
		\caption{$\rho_m$ vs x at $T=1$}\label{CongestedDensitymDelta90_1s}
	\end{subfigure}
	\begin{subfigure}[t]{0.32\textwidth}
		\vspace{0pt}
		\includegraphics[width=\linewidth]{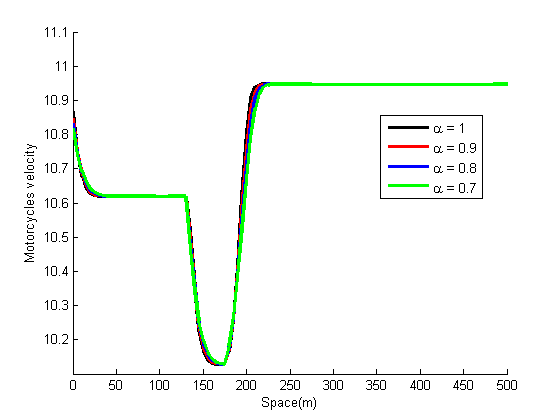}
		\caption{$v_m$ vs x at $T=1$}\label{CongestedVelocitymDelta90_1s}
	\end{subfigure}
	\begin{subfigure}[t]{0.32\textwidth}
		\vspace{0pt}
		\includegraphics[width=\linewidth]{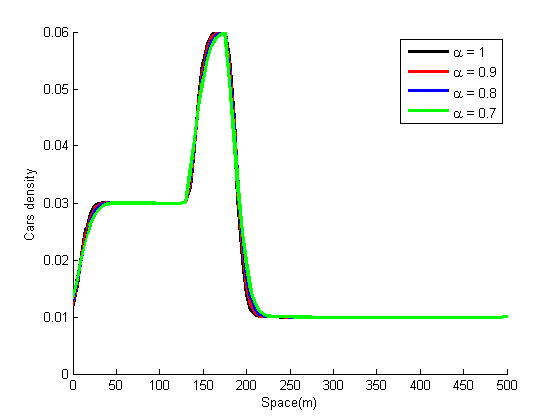}
		\caption{$\rho_c$ vs x at $T=1$}\label{CongestedDensitycDelta90_1s}
	\end{subfigure}
	\begin{subfigure}[t]{0.32\textwidth}
		\vspace{0pt}
		\includegraphics[width=\linewidth]{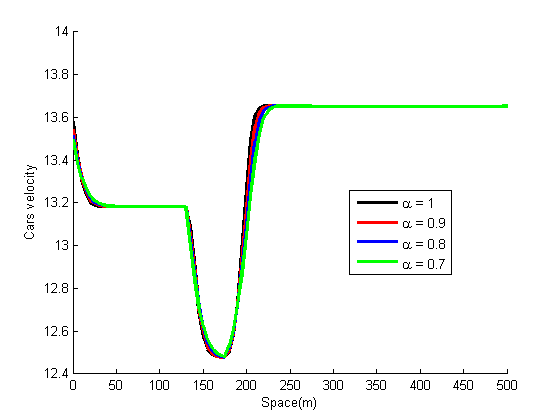}
		\caption{$v_c$ vs x at $T=1$}\label{CongestedVelocitycDelta90_1s}
	\end{subfigure}	
	\begin{subfigure}[t]{0.32\textwidth}
		\vspace{0pt}
		\includegraphics[width=\linewidth]{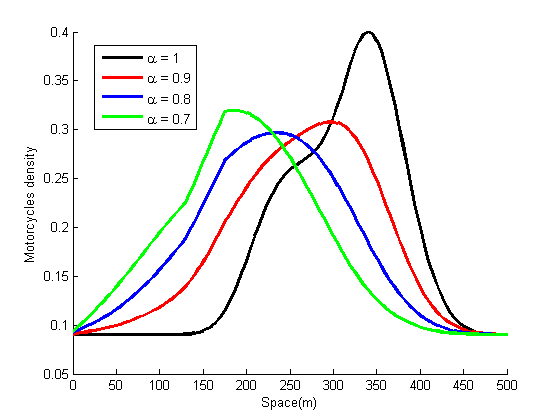}
		\caption{$\rho_m$ vs x at $T=20$}\label{CongestedDensitymDelta90_20s}
	\end{subfigure}
	\begin{subfigure}[t]{0.32\textwidth}
		\vspace{0pt}
		\includegraphics[width=\linewidth]{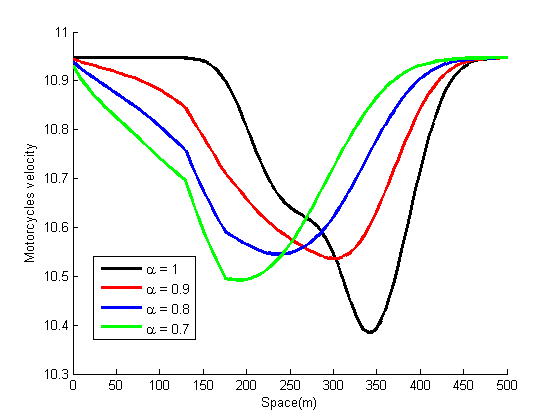}
		\caption{$v_m$ vs x at $T=20$}\label{CongestedVelocitymDelta90_20s}
	\end{subfigure}
	\begin{subfigure}[t]{0.32\textwidth}
		\vspace{0pt}
		\includegraphics[width=\linewidth]{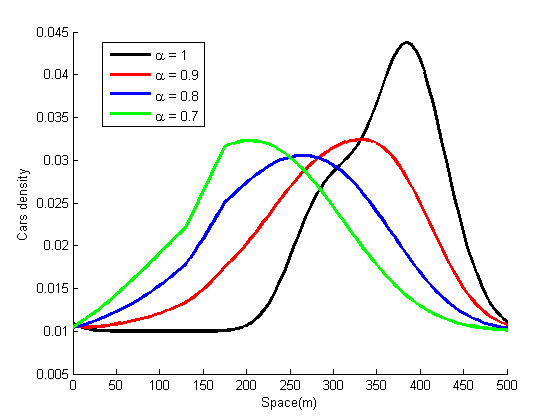}
		\caption{$\rho_c$ vs x at $T=20$}\label{CongestedDensitycDelta90_20s}
	\end{subfigure}
	\begin{subfigure}[t]{0.32\textwidth}
		\vspace{0pt}
		\includegraphics[width=\linewidth]{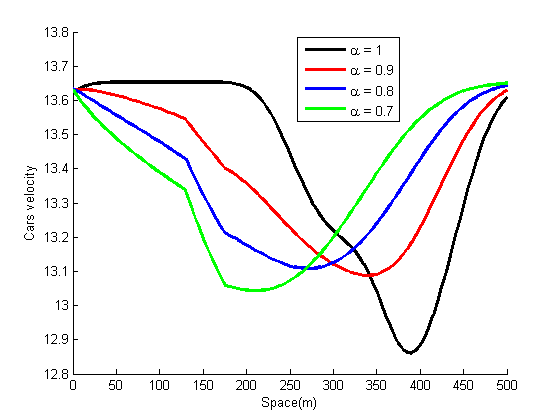}
		\caption{$v_c$ vs x at $T=20$}\label{CongestedVelocitycDelta90_20s}
	\end{subfigure}
	\caption{Densities and velocities Vs space with $\delta = 90\%,~ \alpha = 1, 0.9, 0.8, 0.7$ and $T = 0s, 1s, 20s,$ during congested traffic flow.} \label{CongestedVelocityDelta90One} 
	
	\vspace{-0.1cm}
	
\end{figure}

\begin{figure}[H]
	
	\vspace{0pt}
	
	\centering
	\begin{subfigure}[t]{0.32\textwidth}
		\vspace{0pt}
		\includegraphics[width=\linewidth]{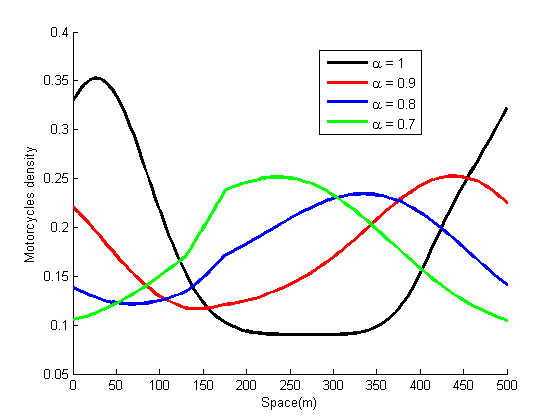}
		\caption{$\rho_m$ vs x at $T=40$}\label{CongestedDensitymDelta90_40s}
	\end{subfigure}
	\begin{subfigure}[t]{0.32\textwidth}
		\vspace{0pt}
		\includegraphics[width=\linewidth]{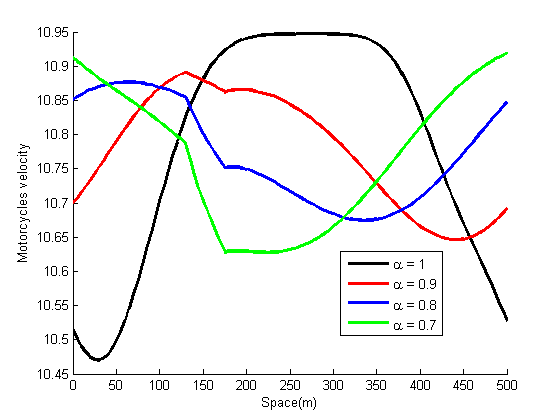}
		\caption{$v_m$ vs x at $T=40$}\label{CongestedVelocitymDelta90_40s}
	\end{subfigure}
	\begin{subfigure}[t]{0.32\textwidth}
		\vspace{0pt}
		\includegraphics[width=\linewidth]{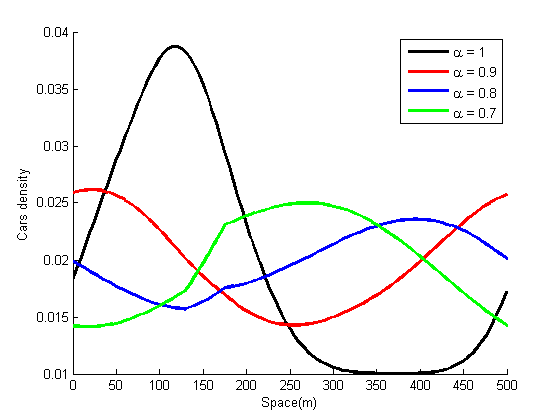}
		\caption{$\rho_c$ vs x at $T=40$}\label{CongestedDensitycDelta90_40s}
	\end{subfigure}
	\begin{subfigure}[t]{0.32\textwidth}
		\vspace{0pt}
		\includegraphics[width=\linewidth]{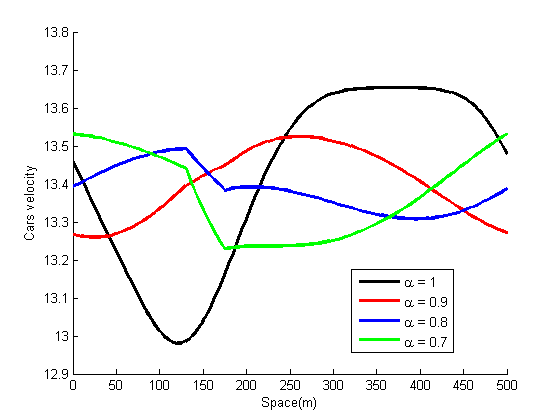}
		\caption{$v_c$ vs x at $T=40$}\label{CongestedVelocitycDelta90_40s}
	\end{subfigure}
	\begin{subfigure}[t]{0.32\textwidth}
		\vspace{0pt}
		\includegraphics[width=\linewidth]{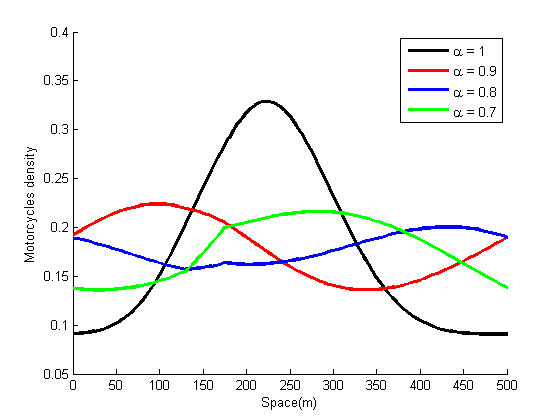}
		\caption{$\rho_m$ vs x at $T=60$}\label{CongestedDensitymDelta90_60s}
	\end{subfigure}
	\begin{subfigure}[t]{0.32\textwidth}
		\vspace{0pt}
		\includegraphics[width=\linewidth]{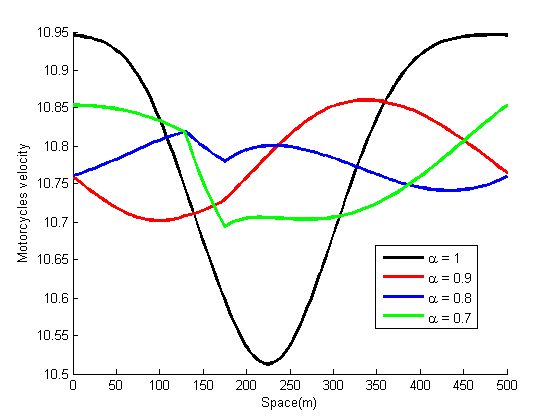}
		\caption{$v_m$ vs x at $T=60$}\label{CongestedVelocitymDelta90_60s}
	\end{subfigure}
	\begin{subfigure}[t]{0.32\textwidth}
		\vspace{0pt}
		\includegraphics[width=\linewidth]{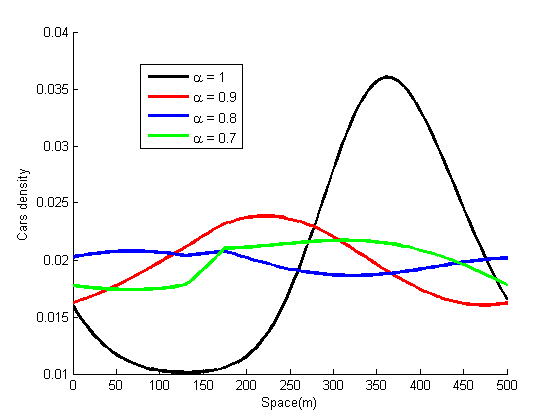}
		\caption{$\rho_c$ vs x at $T=60$}\label{CongestedDensitycDelta90_60s}
	\end{subfigure}
	\begin{subfigure}[t]{0.32\textwidth}
		\vspace{0pt}
		\includegraphics[width=\linewidth]{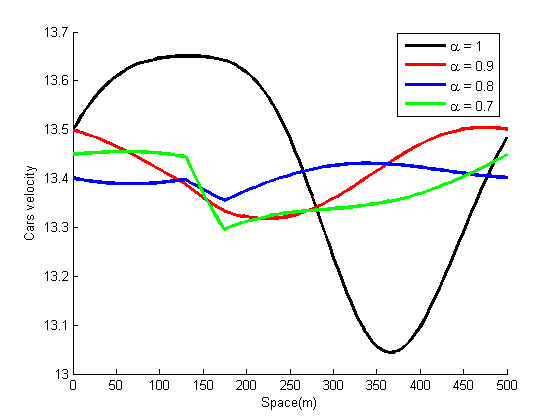}
		\caption{$v_c$ vs x at $T=60$}\label{CongestedVelocitycDelta90_60s}
	\end{subfigure}
	\caption{Densities and velocities Vs space with $\delta = 90\%,~ \alpha = 0.9, 0.8, 0.7$ and $T = 40s, 60s,$ during congested traffic flow.} \label{CongestedVelocityDelta90Two} 
	
	\vspace{-0.1cm}  
	
\end{figure}
Next $\delta$ is set to be $20\%,$ that is; many cars and fewer motorcycles. The proposed model density and velocity distributions are displayed by Figures \ref{CongestedDelta20One} and \ref{CongestedDelta20Two}. 
Same results are observed as in the previous case, except that here both vehicle classes move with lower velocities. 

\begin{figure}[H]
	
	\vspace{0pt}
	
	\centering
	\begin{subfigure}[t]{0.32\textwidth}
		\vspace{0pt}
		\includegraphics[width=\linewidth]{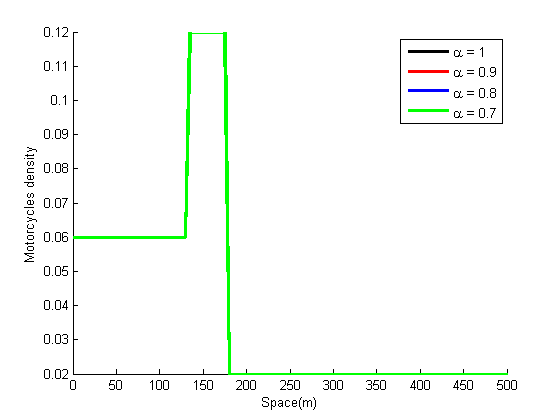}
		\caption{$\rho_m$ vs x at $T=0s$}\label{CongestedDensitymDelta20_0s}
	\end{subfigure}
	\begin{subfigure}[t]{0.32\textwidth}
		\vspace{0pt}
		\includegraphics[width=\linewidth]{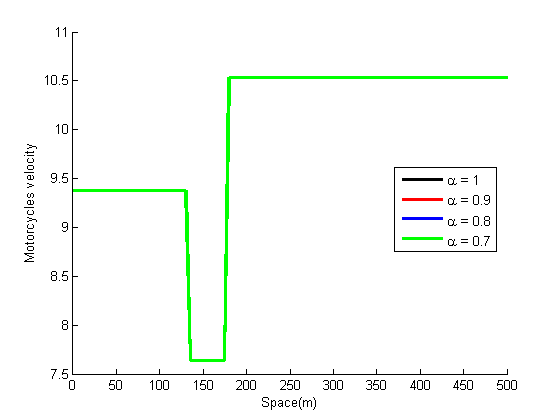}
		\caption{$v_m$ vs x at $T=0s$}\label{CongestedVelocitymDelta20_0s}
	\end{subfigure}	
	\begin{subfigure}[t]{0.32\textwidth}
		\vspace{0pt}
		\includegraphics[width=\linewidth]{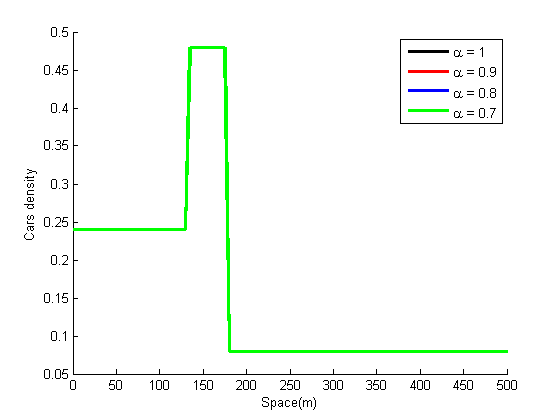}
		\caption{$\rho_c$ vs x at $T=0s$}\label{CongestedDensitycDelta20_0s}
	\end{subfigure}
	\begin{subfigure}[t]{0.32\textwidth}
		\vspace{0pt}
		\includegraphics[width=\linewidth]{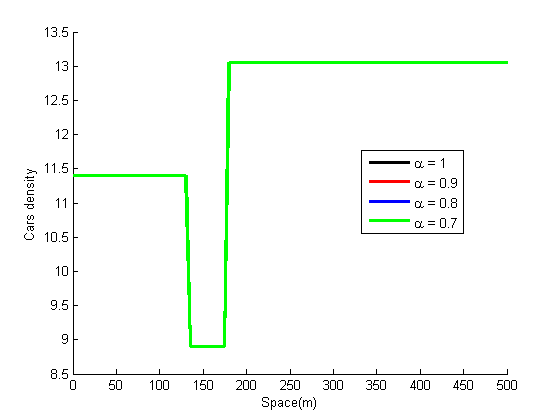}
		\caption{$v_c$ vs x at $T=0s$}\label{CongestedVelocitycDelta20_0s}
	\end{subfigure}
	\begin{subfigure}[t]{0.32\textwidth}
		\vspace{0pt}
		\includegraphics[width=\linewidth]{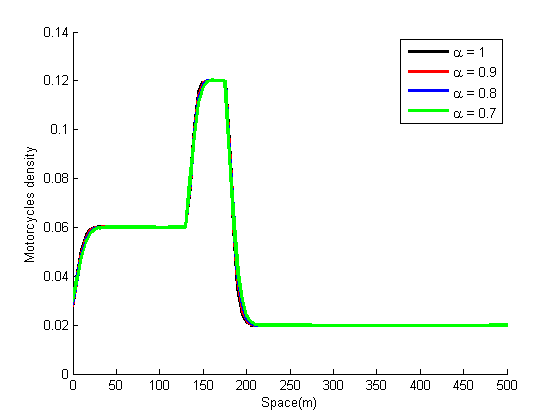}
		\caption{$\rho_m$ vs x at $T=1s$}\label{CongestedDensitymDelta20_1s}
	\end{subfigure}
	\begin{subfigure}[t]{0.32\textwidth}
		\vspace{0pt}
		\includegraphics[width=\linewidth]{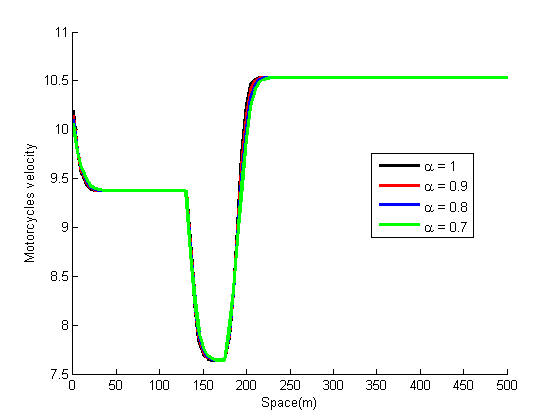}
		\caption{$v_m$ vs x at $T=1s$}\label{CongestedVelocitymDelta20_1s}
	\end{subfigure}
	\begin{subfigure}[t]{0.32\textwidth}
		\vspace{0pt}
		\includegraphics[width=\linewidth]{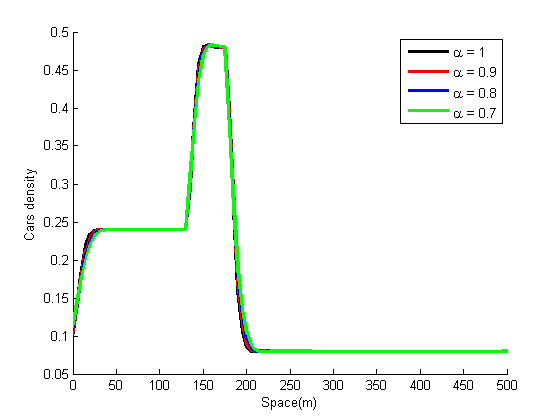}
		\caption{$\rho_c$ vs x at $T=1s$}\label{CongestedDensitycDelta20_1s}
	\end{subfigure}
	\begin{subfigure}[t]{0.32\textwidth}
		\vspace{0pt}
		\includegraphics[width=\linewidth]{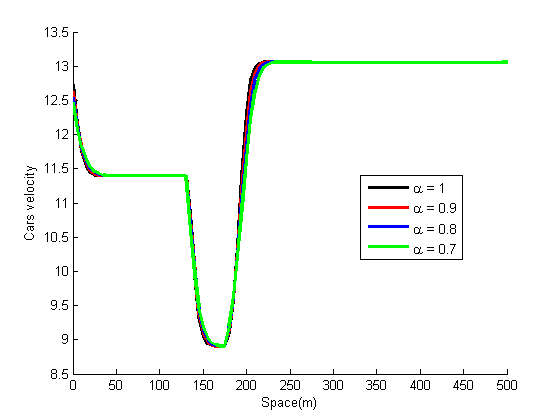}
		\caption{$v_c$ vs x at $T=1s$}\label{CongestedVelocitycDelta20_1s}
	\end{subfigure}
	\begin{subfigure}[t]{0.32\textwidth}
		\vspace{0pt}
		\includegraphics[width=\linewidth]{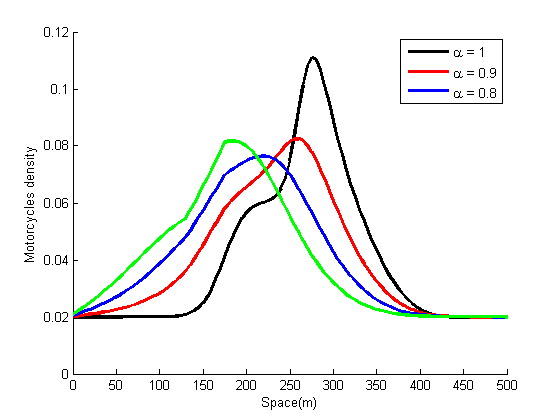}
		\caption{$\rho_m$ vs x at $T=20s$}\label{CongestedDensitymDelta20_20s}
	\end{subfigure}
	\begin{subfigure}[t]{0.32\textwidth}
		\vspace{0pt}
		\includegraphics[width=\linewidth]{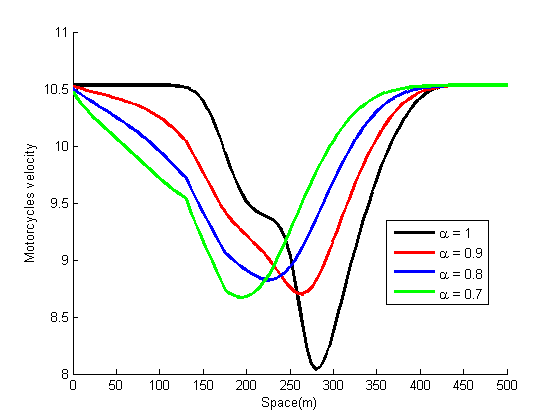}
		\caption{$v_m$ vs x at $T=20s$}\label{CongestedVelocitymDelta20_20s}
	\end{subfigure}
	\begin{subfigure}[t]{0.32\textwidth}
		\vspace{0pt}
		\includegraphics[width=\linewidth]{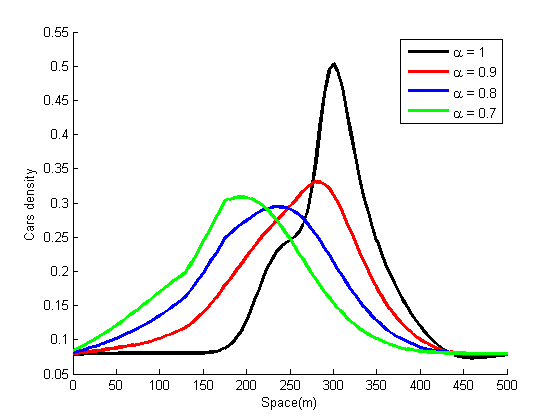}
		\caption{$\rho_c$ vs x at $T=20s$}\label{CongestedDensitycDelta20_20s}
	\end{subfigure}
	\begin{subfigure}[t]{0.32\textwidth}
		\vspace{0pt}
		\includegraphics[width=\linewidth]{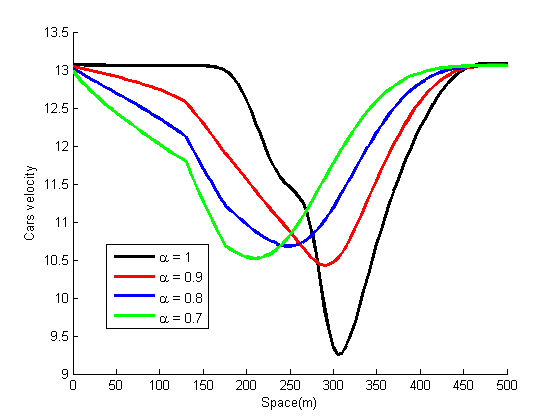}
		\caption{$v_c$ vs x at $T=20s$}\label{CongestedVelocitycDelta20_20s}
	\end{subfigure}
	\caption{Densities and velocities Vs space with $\delta = 20\%,$  and $\alpha = 1, 0.9, 0.8, 0.7,$ at $T = 0s, 1s, 20s,$ during congestion.} \label{CongestedDelta20One} 
	
	\vspace{-0.1cm}
	
\end{figure}

\begin{figure}[H]
	
	\vspace{0pt}
	
	\centering
	\begin{subfigure}[t]{0.32\textwidth}
		\vspace{0pt}
		\includegraphics[width=\linewidth]{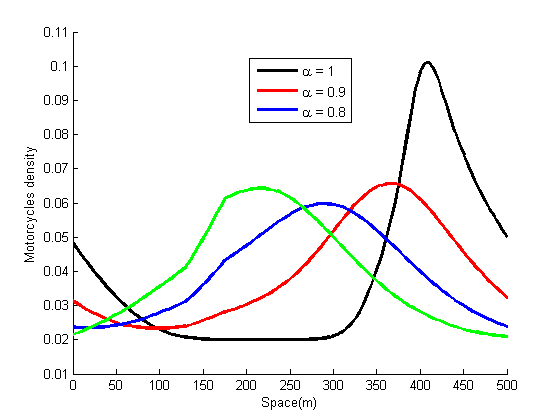}
		\caption{$\rho_m$ vs x at $T=40s$}\label{CongestedDensitymDelta20_40s}
	\end{subfigure}
	\begin{subfigure}[t]{0.32\textwidth}
		\vspace{0pt}
		\includegraphics[width=\linewidth]{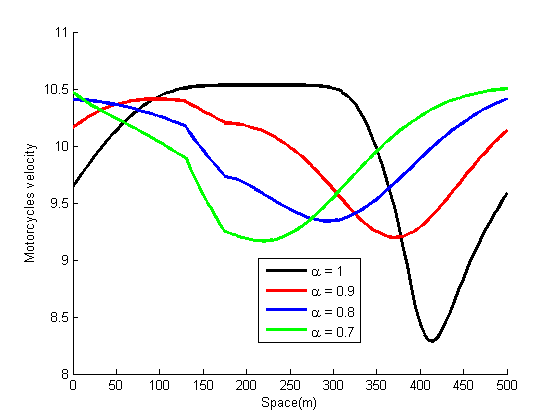}
		\caption{$v_m$ vs x at $T=40s$}\label{CongestedVelocitymDelta20_40s}
	\end{subfigure}
	\begin{subfigure}[t]{0.32\textwidth}
		\vspace{0pt}
		\includegraphics[width=\linewidth]{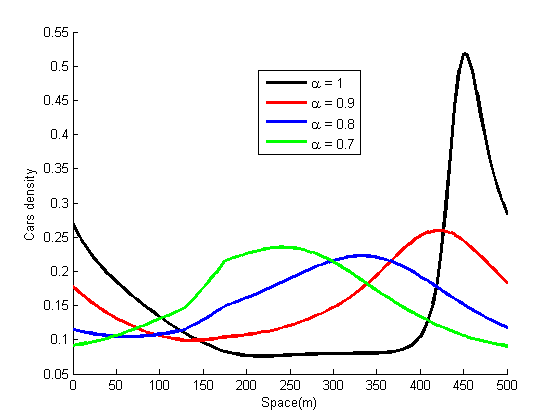}
		\caption{$\rho_c$ vs x at $T=40s$}\label{CongestedDensitycDelta20_40s}
	\end{subfigure}
	\begin{subfigure}[t]{0.32\textwidth}
		\vspace{0pt}
		\includegraphics[width=\linewidth]{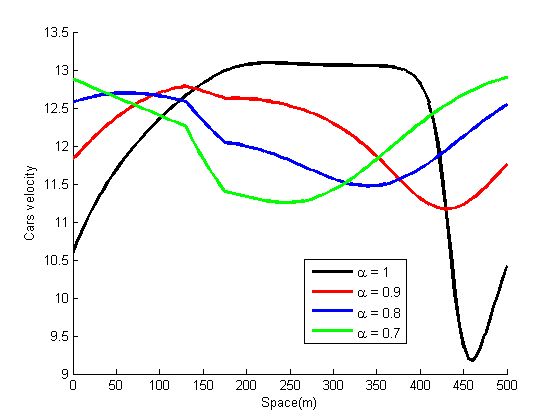}
		\caption{$v_c$ vs x at $T=40s$}\label{CongestedVelocitycDelta20_40s}
	\end{subfigure}
	\begin{subfigure}[t]{0.32\textwidth}
		\vspace{0pt}
		\includegraphics[width=\linewidth]{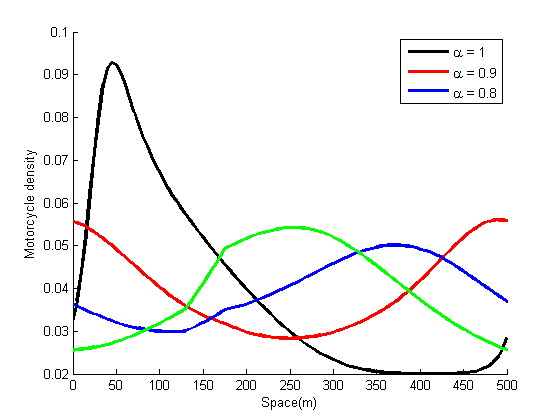}
		\caption{$\rho_m$ vs x at $T=60s$}\label{CongestedDensitymDelta20_60s}
	\end{subfigure}
	\begin{subfigure}[t]{0.32\textwidth}
		\vspace{0pt}
		\includegraphics[width=\linewidth]{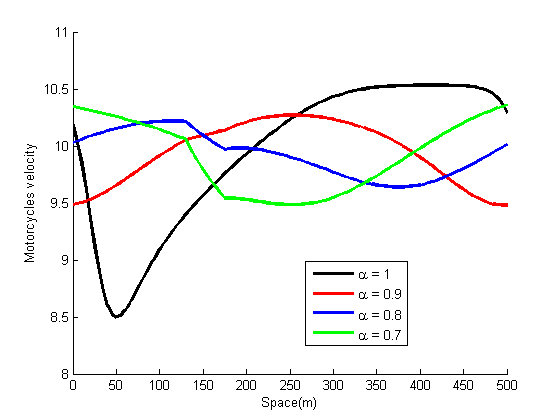}
		\caption{$v_m$ vs x at $T=60s$} \label{CongestedVelocityDelta20mDelta20_60s}
	\end{subfigure}
	\begin{subfigure}[t]{0.32\textwidth}
		\vspace{0pt}
		\includegraphics[width=\linewidth]{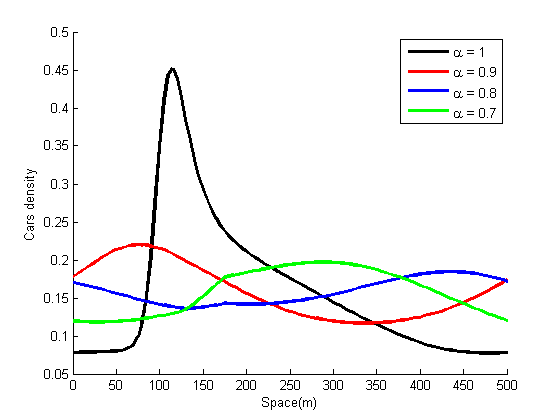}
		\caption{$\rho_c$ vs x at $T=60s$}\label{CongestedDensitycDelta20_60s}
	\end{subfigure}
	\begin{subfigure}[t]{0.32\textwidth}
		\vspace{0pt}
		\includegraphics[width=\linewidth]{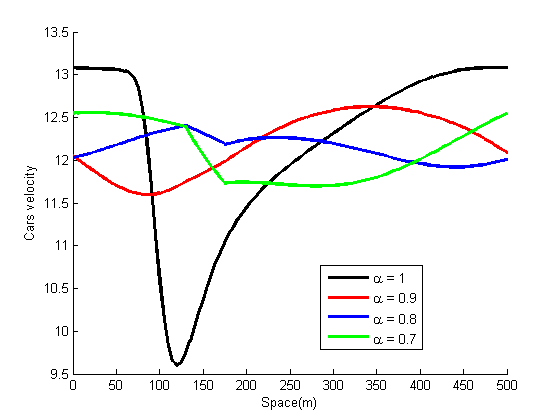}
		\caption{$v_c$ vs x at $T=60s$} \label{CongestedVelocitycDelta20_60s}
	\end{subfigure}
	
	\caption{Densities and velocities Vs space with $\delta = 20\%,$ $\alpha = 1, 0.9, 0.8, 0.7,~ \textbf{and}~ T = 20s, 40s, 60s,$ during congestion.} \label{CongestedDelta20Two} 
	
	\vspace{-0.1cm}
	
\end{figure}

\section{Discussion}
The aim of this paper is to determine the effect of fractional order derivatives of time on the flow of heterogeneous vehicular traffic. An explicit difference scheme is obtained through finite difference method of discretization. The scheme is proved to be consistent, conditionally stable and convergent. Numerical flux is computed by original Roe decomposition and an entropy condition applied to the Roe decomposition. Qualitative analysis shows that the results obtained are a natural extension and generalization of the integer order model, which can be used for reference in solving other macroscopic time-fractional traffic flow models. Numerical results show that smooth results are attainable when the derivative order is either integer or close to integer. During freeway, both vehicle classes move with high velocities close to their maximum values irrespective of the derivative order. When the road is congested, a shock wave develops at $1 s,$ for all derivative orders. Its amplitude continues to decrease with increase of time until it disappears after $20 s,$ for the case of fractional orders. At all times, a shock wave is sustained for the integer order derivative. Same results are obtained at $1 s,$ irrespective the derivative order. In both freeway and congestion, vehicles move with higher velocities when cars are fewer on the road and vice versa. The most desirable results are obtained when the fractional order is close to 1 (the integer order). Densities and velocities for each vehicle class become smoother over time
and are shown to remain within limits. Thus, the results obtained from the proposed model are realistic. As
a control for traffic jam, enforcement of lane discipline and enlargement of roadways are recommended. 

In general, while investigating the heterogeneous traffic flow problems, a fractional-order model is recommended because it maintains moderated values of density and velocity throughout the simulation period unlike the integer-order model whose values fluctuate so much. Although some theoretical results and simulations are presented in this paper, there is much more work unexplored along this topic, such as existence and uniqueness of solution for the proposed model.  

\section*{Acknowledgment}
	The first author acknowledges her other research supervisors Prof. Semu Mitiku Kassa of Department of Mathematics and Statistical Sciences, Botswana
International University of Science and Technology (BIUST), P/Bag $16,$ Palapye, Botswana for providing technical guidance and mentorship. The first author also acknowledges the Sida bilateral program with Makerere University, 2015-2020, project 316 ``Capacity building in Mathematics and its applications'' for providing financial support.

\end{document}